\title{Continuity of Convex Set-valued Maps and a Fundamental Duality Formula for Set-valued Optimization}
\author{Frank Heyde\thanks{University of Graz, Institute of Mathematics and Scientific Computing, Heinrichstr. 36, A-8010 Graz, Austria ({\tt frank.heyde@uni-graz.at}).} \and Carola Schrage\thanks{\tt carolaschrage@gmail.com}}
\newcommand{\norm}  [1]{\ensuremath{\left  \|       #1  \right \|       }}
\newcommand{\sqb}   [1]{\ensuremath{\left [        #1  \right ]        }}
\newcommand{\cb}    [1]{\ensuremath{\left  \{      #1  \right \}       }}
\newcommand{\of}    [1]{\ensuremath{\left (        #1  \right )        }}
\newcommand{\abs  } [1]{\ensuremath{\left  |       #1  \right |        }}
\newcommand{\st} {\ensuremath{\;|\;}}
\newcommand{\epi} {{\rm epi \,}}
\newcommand{\cl}  {{\rm cl  \,}}
\newcommand{\Int} {{\rm int \,}}
\newcommand{\cone}{{\rm cone\,}}
\newcommand{\gr}  {{\rm gr\,}}
\newcommand{\dom}{{\rm dom\,}}
\newcommand{\V}{\mathcal{V}}
\newcommand{\A}{\mathcal{A}}
\newcommand{\F}{\mathcal{F}}
\renewcommand{\P}{\mathcal{P}}
\newcommand{\NN}{\mathcal{N}}
\newcommand{\R}{\mathrm{I\negthinspace R}}
\newcommand{\N}{\mathrm{I\negthinspace N}}
\renewcommand{\subset}{\subseteq}
\definecolor{color0}{gray}{.50}
\definecolor{color1}{rgb}{0,.2,.8}
\definecolor{color2}{rgb}{1,.2,0}
\definecolor{color3}{rgb}{0,.6,0}
\newcommand{\gre}{\color{color3}}
\newtheorem{theorem}{Theorem}[section]
\newtheorem{remark}[theorem]{Remark}
\newtheorem{lemma}[theorem]{Lemma}
\newtheorem{proposition}[theorem]{Proposition}
\newtheorem{example}[theorem]{Example}
\theoremstyle{definition}
\newtheorem{definition}[theorem]{Definition}
\newcommand{\OLR}{\overline{\mathrm{I\negthinspace R}}}
\begin{document}
%
%
%
%
%
%
%

\maketitle

\begin{abstract}
Over the past years a theory of conjugate duality for set-valued functions that map into the set of upper closed subsets of a preordered topological vector space was developed. For scalar duality theory, continuity of convex functions plays an important role. For set-valued maps different notions of continuity exist. We will compare the most prevalent ones in the special case that the image space is the set of upper closed subsets of a preordered topological vector space and analyze which of the results can be conveyed from the extended real-valued case.

Moreover, we present a fundamental duality formula for set-valued optimization, using the weakest of the continuity concepts under consideration for a regularity condition.\\[1em]
{\bf Key words.} set-valued map; upper closed sets; continuity; semicontinuous function; convex function; Legendre-Fenchel conjugate; fundamental duality formula \\[1em]
{\bf Mathematics Subject Classifications (2010).} 54C60, 58C07, 52A41, 49N15
\end{abstract}

\section{Introduction}
Recently, a new theory of conjugate duality was developed for set-valued functions mapping into the space $\F(Z,C)$ of upper closed subsets of a topological vector space $Z$ that is preordered by a closed convex cone $C$ (see, e.g., the papers of Hamel \cite{Hamel09}, \cite{Hamel11}, Schrage \cite{DissSchrage}, \cite{Schrage11} or the book of L\"ohne \cite{BookLoehne}).

There are two main applications for functions mapping into $\F(Z,C)$ which we would like to point out. The first one comes from Mathematical Finance. When measuring the risk of investments in several markets (or assets) under the presence of transaction costs between the markets (or assets), it turned out to be appropriate to use set-valued functions mapping into $\F(Z,C)$, where $Z$ is the space of eligible markets (or assets) and $C$ is the solvency cone representing the exchange rates and transaction costs. Details can be found in \cite{HH2010} and \cite{HHR2011}.
The second application concerns vector optimization problems and a duality theory for them. As we describe in Section \ref{sec:BCN}, the original preordered vector space $(Z,\le_C)$, which is in general no complete lattice, can be embedded into the complete lattice $(\F(Z,C),\supseteq)$. It proved to be advantageous in several aspects to consider the $\F(Z,C)$-valued optimization problem rather than the original vector optimization problem. One aspect is the possibility to use a solution concept based on the attainment of the infimum as introduced in \cite{HL2011} and another aspect is that the existence of infimum and supremum allows the definition of the conjugates in the style of the scalar case. It seems that the duality theory developed in \cite{Hamel09}, \cite{Hamel11} and \cite{DissSchrage} for $\F(Z,C)$-valued functions is the most promising for vector-valued, i.e., $(Z,\le_C)$-valued functions. 
A third application deals with epigraphical multifunctions. If $X$ is a locally convex topologiacal vector space with topological dual $X^*$ and $\Gamma(X)$ the space of all proper lower semicontinuous convex extended real-valued functions on $X$ then the function $f \mapsto \epi f$ is a set-valued function from $\Gamma(X)$ to $\F(X\times\R,\cb{0}\times\R_+)$ and $f\mapsto \epi f^*$ maps from $\Gamma(X)$ to $\F(X^*\times\R,\cb{0}\times\R_+)$. If $\Gamma(X)$ is provided with Joly's topology (see Joly \cite{Joly}) then both epigraphical multifunctions become lower continuous in the sense of Definition \ref{def:cont} below.

A couple of set-valued variants of the Fenchel-Rockafellar duality theorem have been formulated in the context of $\F(Z,C)$-valued convex analysis (\cite[Theorem 2]{Hamel11}, \cite[Theorem 4.2.9]{DissSchrage}, \cite[Theorem 5.5]{Schrage11} and \cite[Theorem 3.16]{BookLoehne}). Despite of some differences in the definition of the conjugates, the conclusions of the statements in the above mentioned theorems are equivalent. However, different regularity conditions, which can be considered as generalizations of the classic condition that one of the functions involved is continuous in one point of the domain, are used as assumptions. 
In fact, continuity, upper semicontinuity, local boundedness from above on the interior of the domain and nonemptiness of the interior of the epigraph are all equivalent properties for a convex extended real-valued function.
The purpose of the present article is to analyze these relations in the set-valued case and to come up with a preferably weak regularity condition.

This paper is organized as follows. In 
Section \ref{sec:BCN} we introduce the basic concepts and notation for set-valued functions and their conjugates that are used in this paper. Section \ref{sec:CCC} deals with the various semicontinuity notions for set-valued functions and their relationships. Finally, we present a set-valued variant of the fundamental duality formula using the weakest of the upper semicontinuity notions under consideration as a regularity condition in Section \ref{sec:FDF}.

\section{Basic concepts and notation}\label{sec:BCN}
In this section we introduce the basic concepts and notation in the context of set-valued functions and their conjugates. Details can be found, e.g., in \cite{Hamel09} and \cite{DissSchrage}.

Unless stated otherwise, throughout this article $X$ and $Z$ are topological vector spaces with topological dual spaces $X^*$ and $Z^*$, respectively, 
$\NN(x_0)$ and $\V(z_0)$ denote the systems of neighborhoods of the points $x_0$ in $X$ and $z_0$ in $Z$.
On $Z$, a preorder $\le_C$ is generated by a nonempty
closed convex cone $C\subseteq Z$, setting $z_1\leq_C z_2$ iff $z_2-z_1 \in C$.
For details about ordered topological vector spaces the reader is referred e.g. to \cite{Peressini} or \cite{WongNg}.

The negative dual cone of $C$ is denoted by
\[
C^-:=\cb{z^*\in Z^* \st z^*(z)\le 0 \text{ for all }z\in C}.
\]
Throughout the paper we assume that $C^-\neq\cb{0}$. When $X$ is locally convex then $C^-\neq\cb{0}$ is satisfied if and only if $C\neq Z$.

In order to derive a satisfactory duality theory for vector optimization problems, it turned out to be useful to embed $(Z,\le_C)$ into a suitable subset of the power set $\P(Z)$ of $Z$ (including the empty set). We consider the collection of upper closed subsets of $Z$, defined by
\[
\F(Z,C)=\cb{A\subset Z \st A=\cl (A+C)},
\] 
the set of all closed subsets of $Z$ whose recession cone contains $C$ as a subset but need not be equal to $C$. 
Note that in \cite{Hamel09} and \cite{DissSchrage} the collection of upper closed sets is denoted by $\P^t_C(Z)$, but we prefer the notation $\F$ that is used in \cite{HL2011} and \cite{BookLoehne}. 


The preorder $\le_C$ on $Z$ is extended to the power set $\P(Z)$ by defining
\[
A \preccurlyeq_C B \quad\Leftrightarrow\quad \forall b\in B\; \exists a\in A : a\le_C b \quad\Leftrightarrow\quad B\subseteq A+C.
\]
The set $\F(Z,C)$ is partially ordered by $\preccurlyeq_C$ and for all $A,B\in\F(Z,C)$ it holds $A\preccurlyeq_C B$, iff $B\subseteq A$.
The preordered space $(Z,\le_C)$ can be embedded into the partially ordered space
$(\F(Z,C),\supseteq)$ by the map $z\mapsto \cb{z}+C$.
Moreover,  $(\F(Z,C),\supseteq)$ is a complete lattice whereas $(Z,\le_C)$ is in general not. If $\A\subseteq \F(Z,C)$ then
the infimum and supremum of $\A$ in $(\F(Z,C),\supseteq)$ are given by
\begin{align*}
&\inf \A=\cl\bigcup_{A\in\A} A, \qquad \sup \A = \bigcap_{A\in \A} A.
\end{align*}
The greatest element of $(\F(Z,C),\supseteq)$ is $\emptyset$ and the least element is $Z$.

For functions $f:X\to \P(Z)$ the graph is defined as
\begin{align*}
\gr f &=\cb{(x,z)\in X\times Z \st z\in f(x)}
\end{align*}
and the domain of $f$ is defined as
\[
\dom f =\cb{x\in X \st f(x)\neq \emptyset}.
\]

A function $f:X\to \P(Z)$ is called $C$--convex, iff 
\[
\forall x_1,x_2\in X,\; \forall t\in\of{0,1}:\quad f(tx_1+(1-t)x_2)\preccurlyeq_C tf(x_1)+(1-t)f(x_2),  
\]
compare e.g. \cite[Definition 14.7]{Jahn04}. 
If $f:X\to\F(Z,C)$ is $C$--convex then it is convex-valued.
It is easily seen that if $f:X\to\F(Z,C)$ is $C$--convex, then $\gr f$ is a convex set and vice versa. For simplicity, we refer to $C$--convex functions mapping into $\F(Z,C)$ as convex functions.

In \cite[Definition 5]{Hamel09}, the \textit{(negative) conjugate} of a function $f:X\to \F(Z,C)$ is the function $-f^*:X^*\times C^-\setminus\cb{0}\to \F(Z,C)$ defined by
\begin{align}
(-f^*)(x^*,z^*) = \cl\bigcup\limits_{x\in X} \of{f(x)+S_{(x^*,z^*)}(-x)}
\end{align}
where $S_{(x^*,z^*)}:X\to\F(Z,C)$, defined by
\begin{align}
S_{(x^*,z^*)}(x)=\cb{z\in Z\colon x^*(x)+z^*(z)\leq 0},
\end{align}
serves as set-valued replacement of linear functionals. Compare \cite[Section 3]{Hamel09}.

Following \cite[Definition 3.1.2]{DissSchrage} we define the scalarization $\varphi_{(f,z^*)}:X\to\OLR$ of $f:X\to\F(Z,C)$ in direction $z^*\in Z^*$ by
\[
\varphi_{(f,z^*)}(x)=\inf_{z\in f(x)} -z^*(z)=-\sup_{z\in f(x)}z^*(z).
\]
So $\varphi_{(f,z^*)}(x)$ is the negative of the support functional to the set $f(x)$ in direction $z^*$. The function $f$ is convex, iff for all $z^*\in C^-\setminus\cb{0}$ the scalarizations $\varphi_{(f,z^*)}:X\to\OLR$ are convex as well. 

Whenever $Z$ is a Hausdorff locally convex space, a set-valued function $f:X\to\F(Z,C)$ with convex values is uniquely characterized by its family of scalarizations with $z^*\in C^-\setminus\cb{0}$. By a separation argument in $Z$, we have
\[
f(x)=\bigcap_{z^*\in C^-\setminus\cb{0}}\cb{z\in Z\st \varphi_{(f,z^*)}(x)\leq -z^*(z)}
\]
for all $x\in X$. Also the conjugate can be expressed by the conjugate of the corresponding scalarization by virtue of
\begin{multline*}
\forall (x^*,z^*)\in X^*\times C^-\setminus\cb{0}:\\
(-f^*)(x^*,z^*) = \cb{z\in Z\st -(\varphi_{(f,z^*)})^*(x^*)\leq -z^*(z)},
\end{multline*}
compare \cite[Lemma 1]{Hamel09}.
\begin{remark} Schrage \cite{DissSchrage, Schrage11} has also defined a positive conjugate $f^*$, but this requires the introduction of a suitable difference of sets. In order to avoid that, we use the negative conjugate in this paper.
\end{remark}
\section{Comparison of continuity concepts}\label{sec:CCC}

In this section we will analyze the relations between several existing semicontinuity concepts for set-valued functions mapping into $\F(Z,C)$ in general and, in particular, for convex functions. Moreover, we will work out a suitable regularity condition for strong duality in set-valued optimization.

Before we deal with set-valued functions, we briefly recall the extended real-valued case. For a convex extended real-valued function continuity, upper semicontinuity, local boundedness from above at one point in the interior of the domain and nonemptiness of the interior of the epigraph are all equivalent and each of these properties implies lower semicontinuity.
Usually, a separation argument, which is true under the assumption that the epigraph has a nonempty interior, is used for proving strong duality in the scalar case. This is the assumption that is actually needed in the proof of strong duality. However, the equivalent property of continuity at one point in the interior of the domain is often chosen as regularity condition since it is more handy. But how about the set-valued case? Is continuity of a $\F$-valued function still equivalent to the nonemptiness of the interior of the epigraph and, as there are several different continuity concepts for set-valued functions, for which kind of continuity? The answer to these questions will be given below.


First, we turn toward the classic semicontinuity notions for set-valued maps. There is a vast amount of literature dealing with these concepts, e.g., \cite{AliprantisBorder}, \cite{AubinCellina}, \cite{AubinFrankowska}, \cite{Beer}, \cite{Berge}, \cite{Deimling}, \cite{GRTZ}, \cite{RoWe98} to mention but a few. We will base our presentation on \cite[Sections 2.5 and 2.6]{GRTZ}, which furnishes a rather comprehensive treatment of these notions and their relations.

\begin{definition}\label{def:cont}
(i) 
A function $f: X \to \P(Z)$ is called {\em upper continuous} (u.c.) at a point $x_0\in X$ iff for any open set $V$ in $Z$ with $f(x_0) \subseteq V$ there is a neighborhood $U$ of $x_0$ such that $f(x)\subseteq V$ for all $x\in U$.

(ii) 
A function $f: X \to \P(Z)$ is called {\em lower continuous} (l.c.) at a point $x_0\in X$ iff for any $z_0\in f(x_0)$ and any $V\in \V(z_0)$ there is a neighborhood $U$ of $x_0$ such that $f(x) \cap V \neq \emptyset$ for all $x\in U$.


(iii) 
A function $f: X \to \P(Z)$ is called {\em Hausdorff upper continuous} (H-u.c.) at a point $x_0\in X$ iff for any neighborhood $V$ of $0$ in $Z$ there is a neighborhood $U$ of $x_0$ such that $f(x)\subseteq f(x_0)+ V$ for all $x\in U$.

(iv) 
A function $f: X \to \P(Z)$ is called {\em Hausdorff lower continuous} (H-l.c.) at a point $x_0\in X$ iff for any neighborhood $V$ of $0$ in $Z$ there is a neighborhood $U$ of $x_0$ such that $f(x_0)\subseteq f(x)+V $ for all $x\in U$.

\end{definition}

\begin{remark}
Although we have stated in Section \ref{sec:BCN} that $X$ should be a vector space, the linear structure of $X$ is no requirement for the preceding concepts. Likewise, in the remainder of this section each statement that does not assume the function $f$ to be convex ($C$--convex) is also true in the more general situation when $X$ is merely a topological space.
\end{remark}

\begin{remark}
Note that the definitions above apply to any $x_0\in X$. $x_0$ does not need to be in the domain of $f$. However, it is easy to see that $f$ is lower (Hausdorff) continuous at $x_0$ by force if $x_0\not\in \dom f$, and $x_0\in \Int(X\setminus \dom f)$ is necessary and sufficient for $f$ being upper (Hausdorff) continuous at $x_0\not\in \dom f$. 
\end{remark}

\begin{remark}
The notation for the above concepts varies in the literature.
Upper and lower continuity are often referred to as upper and lower semicontinuity
(\cite{AubinCellina}, \cite{AubinFrankowska}, \cite{Beer}, \cite{Berge}, \cite{Deimling}). However, we will stick to the notation from \cite{GRTZ} in order to highlight the structural differences to upper and lower semicontinuity in a lattice sense that will be considered later (see Definition \ref{def:latticeprop}). In fact, lower continuity of an $\F(Z,C)$-valued function corresponds to upper semicontinuity in the scalar case.  Compare also Remark \ref{rem:contvssemicont}.

In \cite{AliprantisBorder} the notation of upper and lower hemicontinuity is used instead of upper and lower continuity, whereas upper hemicontinuity in \cite{AubinCellina} and \cite{AubinFrankowska} means lower semicontinuity of all scalarizations. Moreover, in normed spaces Hausdorff continuity is also referred to as $\varepsilon-\delta-$semicontinuity (\cite{Deimling}) or semicontinuity in the $\varepsilon-$sense (\cite{AubinCellina}).

Other notations that occur in the literature are inner and outer semicontinuity (\cite{Beer}, \cite{RoWe98}) and closedness (\cite{GRTZ}). Whereas inner semicontinuity coincides with lower continuity, outer semicontinuity and closedness are the same, and for $\F(Z,C)$-valued functions they coincide with lower lattice-semicontinuity, which is defined later (see Definition \ref{def:latticeprop}, Remark \ref{rem:LatticePropInF} and Remark \ref{rem:lls}).

Of course, one can find more continuity concepts in the literature. However, it is not the purpose of this paper to give a complete overwiew but to analyze the concepts used most frequently.
\end{remark}

Another useful concept for our considerations is that of efficiency of a set-valued function introduced by Verona and Verona \cite{VeronaVerona90} with $Z$ a Banach space. We generalize it here to arbitrary topological vector spaces.

\begin{definition}
A function $f:X\to \P(Z)$ is called {\em efficient} (eff.) at a point $x_0\in X$ iff there exist a neighborhood $U$ of $x_0$ in $X$ and a bounded set $B\subseteq Z$ such that $f(x)\cap B\neq \emptyset$ for all $x\in U$.
\end{definition}
Recall that a subset $B$ of a topological vector space is called bounded iff it is absorbed by every neighborhood of the origin (i.e., for every $V\in \V(0)$ exists some $r>0$ with $B\subseteq rV$).

The following implications are proven in \cite{GRTZ}:
\[
\begin{xy} \xymatrix{
\text{ eff. }\quad &
\quad\text{ l.c. }\quad \ar@<5pt>@2{<-}[l]^{\substack{f\text{ $C$--convex}\\ f(x_0)\subseteq D+C}} \ar@<5pt>@2{<-}[r] &
\quad\text{ H-l.c. }\quad \ar@<5pt>@2{<-}[l]^{\substack{f\text{ $C$--convex}\\ f(x_0)\subseteq D+C}\quad} &
\quad\text{ H-u.c. }\quad \ar@<5pt>@2{<-}[l]^{\substack{f\text{ $C$--convex}\\ f(x_0)\subseteq D+C}\;\,} & 
\quad\text{ u.c. }\quad \ar@<-5pt>@2{->}[l]
}\end{xy}
\]
Here all conditions are supposed to hold at $x_0\in \dom f$. Moreover, $f(x_0)\subseteq D+C$ means that there exists some bounded set $D\subseteq Z$ with this property.

\begin{remark}
In \cite{GRTZ} also the concepts of $C$-upper continuity, $C$-lower continuity, $C$-Hausdorff upper continuity and $C$-Hausdorff lower continuity are defined. The concepts of $C$-l.c., $C$-H-l.c. and $C$-H-u.c. coincide with l.c., H-l.c. and H-u.c., respectively, for functions mapping into $\F(Z,C)$. $C$-u.c. should be placed between H-u.c. and u.c. in the above diagram but we will omit it since it is not essential for our purpose of working out a suitable regularity condition.
\end{remark}

\begin{remark} In \cite[Theorem 2.6.6]{GRTZ} the fact that efficiency implies lower continuity was proven under the additional assumption that there is some bounded set $D$ with $f(x_0)\subseteq D+C$. This condition is not necessary, as the following proposition shows.
\end{remark}

\begin{proposition}
Let $f:X\to\F(Z,C)$ be convex. If $f$ is efficient at $x_0\in X$ then $f$ is lower continuous at $x_0$.
\end{proposition}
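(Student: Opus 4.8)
The plan is to transport the classical scalar fact that a convex function which is locally bounded from above is upper semicontinuous over to the lattice $\F(Z,C)$, exploiting the correspondence (announced in the remarks above) between lower continuity here and upper semicontinuity in the scalar case. The one ingredient that lets me drop the boundedness hypothesis $f(x_0)\subseteq D+C$ used in \cite{GRTZ} is that every value lies in $\F(Z,C)$, so that $f(x)+C\subseteq\cl(f(x)+C)=f(x)$. This identity will let me replace the scalar upper bound by a single point $b$ taken from the bounded set supplied by efficiency. I first note that efficiency at $x_0$ forces $f(x)\neq\emptyset$ on a whole neighborhood, hence $x_0\in\dom f$; the case $x_0\notin\dom f$ is vacuous since then $f(x_0)=\emptyset$.

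Fix $z_0\in f(x_0)$ and a neighborhood $V$ of $z_0$; it suffices to treat $V=z_0+W$ with $W$ a balanced neighborhood of $0$. Let $U_0$ and the bounded set $B$ be as in the definition of efficiency. Since $B-z_0$ is bounded it is absorbed by $W$, so I pick $r>0$ with $B-z_0\subseteq rW$ and fix a scale $t\in(0,1)$ with $tr\le 1$. The neighborhood I will exhibit is $U=x_0+t(U_0-x_0)$. For $x\in U$ there is $x'\in U_0$ with $x=(1-t)x_0+tx'$, and efficiency yields a point $b\in f(x')\cap B$. Convexity in the form $(1-t)f(x_0)+tf(x')\subseteq f(x)+C$ then places $(1-t)z_0+tb$ in $f(x)+C$, and the key step is that, because $f(x)+C\subseteq f(x)$, this point lies in $f(x)$ itself. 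Finally $\left((1-t)z_0+tb\right)-z_0=t(b-z_0)\in t(B-z_0)\subseteq trW\subseteq W$ by balancedness, so $(1-t)z_0+tb\in z_0+W=V$, giving $f(x)\cap V\neq\emptyset$ for every $x\in U$ as required.

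The delicate point, and the only place where care is needed, is the uniformity of the construction in $x$: the scale $t$ must be chosen once and for all, depending on $W$ and $B$ but not on $x$, after which the reflection $x\mapsto x'=x_0+\frac{1}{t}(x-x_0)$ maps $U$ into $U_0$ and the interpolated point $(1-t)z_0+tb$ stays within $z_0+W$ uniformly. Beyond fixing $t$ before letting $x$ vary, I expect no genuine obstacle; everything else follows directly from $C$--convexity and the upper-closedness identity $f(x)+C\subseteq f(x)$, which is precisely what makes the boundedness assumption on $f(x_0)$ unnecessary.
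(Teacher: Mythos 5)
Your proof is correct and follows essentially the same route as the paper's: efficiency supplies the neighborhood $U_0$ and bounded set $B$, balancedness plus absorption fix a single scale $t\in(0,1)$, and $C$--convexity places the interpolated point $(1-t)z_0+tb$ in $f(x)\cap\of{\cb{z_0}+V}$ for every $x$ in $x_0+t(U_0-x_0)$. The only cosmetic difference is that you spell out the absorption step $f(x)+C\subseteq f(x)$, which the paper leaves implicit in writing $f(x)\supseteq(1-t)f(x_0)+tf(x_0+w)$.
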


\begin{proof}
By efficiency of $f$ at $x_0$ there is some neighborhood $W$ of $0$ in $X$ and a bounded set $B\subseteq Z$ such that $f(x_0+w)\cap B \neq \emptyset$ for every $w\in W$. Let $z_0\in f(x_0)$ and $V\in\V(0)$. Then there is some balanced neighborhood $V'$ of $0$ with $V'\subseteq V$. Since $B$ is bounded, $B-\cb{z_0}$ is bounded as well, and there is some $t\in (0,1)$ such that $t(B-\cb{z_0})\subseteq V'\subseteq V$.

Let $U:=x_0+tW$. For all $x\in U$ there is some $w\in W$ with $x=x_0+tw=(1-t)x_0+t(x_0+w)$. Since $f(x_0+w)\cap B \neq \emptyset$ there is some $b\in f(x_0+w)\cap B$. By convexity of $f$ we obtain 
\[
f(x)\supseteq (1-t)f(x_0)+tf(x_0+w)\ni (1-t)z_0+tb=z_0+t(b-z_0)\in \cb{z_0}+V.
\]
Hence $f(x)\cap \of{\cb{z_0}+V} \neq \emptyset$.
\end{proof}

Moreover, we can show that under some additional condition lower continuity implies efficiency.

\begin{proposition}
Assume that the condition
\[
{\rm(BN)}\hspace{1cm} \text{there is a bounded set }B\subseteq Z \text{ and some }V\in\V(0)\text{ with } V\subseteq B-C 
\]
is satisfied.

If $f:X\to\F(Z,C)$ is lower continuous at $x_0\in \dom f$ then $f$ is efficient at $x_0$.
\end{proposition}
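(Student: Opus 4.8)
The plan is to exploit two facts simultaneously: lower continuity produces points of $f(x)$ near a fixed reference point $z_0\in f(x_0)$, while condition (BN) together with the upper closedness of the values of $f$ lets us convert such ``nearby'' points into points lying in a fixed bounded set.

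First I would fix some $z_0\in f(x_0)$, which is possible since $x_0\in\dom f$, and let $B$ and $V$ be the bounded set and the neighborhood of $0$ provided by (BN), so that $V\subseteq B-C$. Since $\cb{z_0}+V\in\V(z_0)$, lower continuity of $f$ at $x_0$ yields a neighborhood $U$ of $x_0$ with $f(x)\cap\of{\cb{z_0}+V}\neq\emptyset$ for every $x\in U$. The candidate bounded set for efficiency will be $\cb{z_0}+B$, which is bounded because $B$ is, paired with this same neighborhood $U$.

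The core step is then to verify that $f(x)\cap\of{\cb{z_0}+B}\neq\emptyset$ for all $x\in U$. Fixing $x\in U$ and picking $z\in f(x)\cap\of{\cb{z_0}+V}$, I would write $z=z_0+v$ with $v\in V\subseteq B-C$ and decompose $v=b-c$ for some $b\in B$ and $c\in C$. Then $z+c=z_0+b\in\cb{z_0}+B$. Since $f(x)\in\F(Z,C)$ satisfies $f(x)+C\subseteq\cl\of{f(x)+C}=f(x)$, the point $z+c$ again lies in $f(x)$, whence $z_0+b\in f(x)\cap\of{\cb{z_0}+B}$, as required.

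The main obstacle — or rather the point one has to notice — is that lower continuity by itself only controls $f(x)$ inside a generally unbounded neighborhood of $z_0$, which does not yield efficiency. Condition (BN) is precisely what permits ``pulling'' such a point back along the ordering cone $C$ into a fixed bounded set, and the upper closedness of the values of $f$ (their stability under adding elements of $C$) guarantees that the pulled-back point still belongs to $f(x)$. Once these two ingredients are combined, the remainder is routine set arithmetic.
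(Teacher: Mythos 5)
Your proof is correct and follows essentially the same route as the paper: fix $z_0\in f(x_0)$, apply lower continuity with the neighborhood $\cb{z_0}+V$ from (BN), and use $f(x)+C\subseteq f(x)$ to move the intersection point into the bounded set $\cb{z_0}+B$. The only difference is cosmetic — you argue element-wise where the paper phrases the same step as a set inclusion $(f(x)+C)\cap(\cb{z_0}+B)\neq\emptyset$.
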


\begin{proof}
Since $x_0\in \dom f$ there is some $z_0\in f(x_0)$. Let $B$ be a bounded set in $Z$ and $V$ be a neighborhood of $0$ in $Z$ with $V\subseteq B-C$. By lower continuity there is some $U\in\NN(x_0)$ such that $f(x)\cap (\cb{z_0}+V)\neq \emptyset$ for every $x\in U$. Since $V\subseteq B-C$ we obtain $f(x)\cap (\cb{z_0}+B-C)\neq \emptyset$. Hence $(f(x)+C)\cap (\cb{z_0}+B)\neq \emptyset$ for every $x\in U$, implying efficiency at $x_0$ since $f(x)+C\subseteq f(x)$ and $\cb{z_0}+B$ is bounded.
\end{proof}

\begin{remark}
Each of the following two conditions is sufficient for (BN).

(i) $\Int C \neq \emptyset,\qquad$ (ii) $Z$ is locally bounded.
\end{remark}

Since the usual notions of semicontinuity and boundedness for extended real-valued functions can be expressed by infimum and supremum in the image space, they can be generalized to functions mapping into a complete lattice in the following way.

\begin{definition}\label{def:latticeprop}
Let $(Y,\le)$ be a complete lattice. We denote the top element of $(Y,\le)$ by $\infty$.

(i) $f:X\to Y$ is called {\em lattice-bounded above} (l-b.a.) on a set $M\subseteq X$ iff there is some $y\in Y\setminus\cb{\infty}$ such that $f(x)\le y$ for all $x\in M$.

(ii) 
$f:X\to Y$ is called {\em lower  lattice-semicontinuous} (l.s.c.) at $x_0\in X$ iff
\[f(x_0)\le \sup_{U\in\NN(x_0)}\inf_{x\in U}f(x).\] 
 
(iii) 
$f:X\to Y$ is called {\em upper lattice-semicontinuous} (u.s.c.) at $x_0\in X$ iff
\[f(x_0)\ge \inf_{U\in\NN(x_0)}\sup_{x\in U}f(x).\] 
\end{definition}

Next, we consider the special case where $(Y,\le)=(\F(Z,C),\supseteq)$.

\begin{remark}\label{rem:LatticePropInF}
If $(Y,\le)=(\F(Z,C),\supseteq)$ and $f:X\to\F(Z,C)$, then the above notions can be specified in the following way.

(i) $f$ is lattice-bounded above on some set $M\subseteq X$ iff there is some $a\in Z$ such that $a\in f(x)$ for all $x\in M$.

(ii) $f$ is lower lattice-semicontinuous at $x_0$ iff 
\[
f(x_0)\supseteq\bigcap_{U\in\NN(x_0)}\cl \bigcup_{x\in U}f(x),
\]
i.e.,
\begin{equation}\label{CharLSC}
\forall z_0\not\in f(x_0) \; \exists U\in\NN(x_0)\;\exists V\in\V(z_0) \; \forall x\in U\;\forall z\in V : z\not\in f(x).
\end{equation}

(iii) $f$ is upper lattice-semicontinuous at $x_0$ iff 
\[
f(x_0)\subseteq\cl\bigcup_{U\in\NN(x_0)} \bigcap_{x\in U}f(x),
\]
i.e.,
\begin{equation}\label{CharUSC}
\forall z_0\in f(x_0)\;\forall V\in\V(z_0) \; \exists U\in\NN(x_0)\;\exists z\in V \; \forall x\in U : z\in f(x).
\end{equation}
\end{remark}

\begin{remark}\label{rem:lls}
Note that lower lattice-semicontinuity for $\F(Z,C)$-valued functions coincides with other concepts for set-valued functions. In \cite{GRTZ} a function satisfying property \eqref{CharLSC} is called closed at $x_0$, and outer semicontinuity is a commonly used term for that property as well (see, e.g., \cite{Beer} and \cite{RoWe98}). See also \cite{HL2011} for properties of lower lattice-semicontinuous $\F(Z,C)$-valued functions. 

The above definitions of upper semicontinuity and boundedness seem to be non-standard for set-valued functions.

Moreover, several notions of semicontinuity for functions mapping into ordered topological spaces are introduced in the literature. We refer the interested reader to the papers by Penot and Thera \cite{PenotThera82} and Beer \cite{Beer87}. However, $\F(Z,C)$ is not equipped with a topology by nature and it seems much more sensible to adopt notions that rely on the complete lattice property that $\F(Z,C)$ naturally has rather than defining topologies on $\F(Z,C)$ that make those notions applicable. For this reason, we restrict our considerations to the above mentioned concepts.
\end{remark}

Subsequently, we analyze the relationships between these notions and compare them with the classic concepts.

\begin{proposition}
If $f:X\to\F(Z,C)$ is lattice-bounded above on some neighborhood of $x_0\in X$ then $f$ is efficient at $x_0$. If $\Int C\neq \emptyset$ then also the converse is true.
\end{proposition}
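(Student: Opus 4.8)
The plan is to prove the two implications separately, relying throughout on the pointwise reformulation of lattice-boundedness above given in Remark \ref{rem:LatticePropInF}(i): on a set $M\subseteq X$, the function $f$ is l-b.a.\ precisely when some single point $a\in Z$ satisfies $a\in f(x)$ for all $x\in M$.

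For the forward implication (l-b.a.\ on a neighborhood $\Rightarrow$ efficient) no assumption on $C$ is needed. First I would apply Remark \ref{rem:LatticePropInF}(i) to the neighborhood $U\in\NN(x_0)$ on which $f$ is l-b.a., obtaining a point $a\in Z$ with $a\in f(x)$ for every $x\in U$. The singleton $B=\cb{a}$ is bounded and meets every value, i.e.\ $f(x)\cap B\neq\emptyset$ for all $x\in U$, which is exactly efficiency at $x_0$.

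For the converse, assume $\Int C\neq\emptyset$ and that $f$ is efficient at $x_0$, so there are $U\in\NN(x_0)$ and a bounded set $B\subseteq Z$ with $f(x)\cap B\neq\emptyset$ for all $x\in U$. The goal is to manufacture one point $a$ lying in every $f(x)$. The idea is twofold. First, I would produce a single $\le_C$-upper bound $a$ of the entire set $B$; this is the only place the interior hypothesis enters. Picking $c_0\in\Int C$ and a balanced $V\in\V(0)$ with $c_0+V\subseteq C$, boundedness of $B$ yields $r>0$ with $B\subseteq rV$, and I claim $a:=rc_0$ works: for $b=rv\in B$ with $v\in V$ one has
\[
a-b=r(c_0-v)\in r\of{c_0+V}\subseteq rC=C,
\]
using $V=-V$ and that $C$ is a cone, so $b\le_C a$ for all $b\in B$. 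Second, I would exploit upper closedness: since $f(x)=\cl\of{f(x)+C}$, we have $f(x)+C\subseteq f(x)$, so any $b_x\in f(x)\cap B$ drags its whole translate $b_x+C$ into $f(x)$. As $a-b_x\in C$ gives $a\in b_x+C\subseteq f(x)$, we conclude $a\in f(x)$ for every $x\in U$, and Remark \ref{rem:LatticePropInF}(i) then gives lattice-boundedness above on $U$.

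The main obstacle is conceptual rather than computational: recognizing that efficiency — mere intersection with a fixed bounded set — can be upgraded to containment of a single fixed point, via the interplay between the upper-closedness of the values $f(x)$ and the existence of an order-upper bound for bounded sets. The latter lemma is precisely what fails without $\Int C\neq\emptyset$, and once it is established the remaining estimates are routine.
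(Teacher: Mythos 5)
Your proposal is correct and follows essentially the same route as the paper: the forward direction is the trivial singleton observation, and for the converse both proofs scale an interior point of $C$ (using that the bounded set $B$ is absorbed by a neighborhood $V$ with $c_0+V\subseteq C$, resp.\ $k-V\subseteq C$) to obtain a single point dominating all of $B$, then invoke $f(x)+C\subseteq f(x)$ to place that point in every value $f(x)$.
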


\begin{proof}
Obviously, lattice-boundedness from above on some neighborhood of $x_0$ implies efficiency at $x_0$. For the converse, assume that $U$ is a neighborhood of $x_0$ and $B\subseteq Z$ is a bounded set such that $f(x)\cap B\neq \emptyset$ for every $x\in U$. Since $\Int C\neq \emptyset$ there is some $k\in C$ and a neighborhood $V$ of $0$ in $Z$ with $k-V\subseteq C$. By the boundedness of $B$ there is some $t>0$ such that $B\subseteq tV$. Since $C$ is a cone we have $tk-tV\subseteq C$, i.e., $B\subseteq tV\subseteq tk-C$. Hence $f(x)\cap (tk-C)\neq \emptyset$, i.e., $tk\in f(x)+C\subseteq f(x)$ for every $x\in U$.
\end{proof}

\begin{proposition}
If $f:X\to\F(Z,C)$ is upper lattice-semicontinuous at $x_0\in \dom f$ then there exists a neighborhood $U$ of $x_0$ such that $f$ is lattice-bounded above on $U$. 
\end{proposition}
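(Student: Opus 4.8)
The plan is to read off the conclusion directly from the two specializations collected in Remark~\ref{rem:LatticePropInF}: the characterization \eqref{CharUSC} of upper lattice-semicontinuity and the description in part~(i) of what lattice-boundedness from above means for $\F(Z,C)$-valued maps. The entire content of the proposition lies in unwinding these two descriptions, so the argument should amount to essentially a one-line deduction once the definitions are matched against each other.

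First I would use the hypothesis $x_0\in\dom f$ to fix a point $z_0\in f(x_0)$, which is possible precisely because $f(x_0)\neq\emptyset$. This $z_0$ is the element to which I intend to apply \eqref{CharUSC}.

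Next I would invoke \eqref{CharUSC} with this particular $z_0$ and any chosen neighborhood $V\in\V(z_0)$ (any single choice, e.g. $V=Z$, suffices). The quantifier structure of \eqref{CharUSC} then yields a neighborhood $U\in\NN(x_0)$ together with a point $z\in V$ such that $z\in f(x)$ for every $x\in U$. Setting $a:=z$, I have produced a single $a\in Z$ lying in $f(x)$ for all $x\in U$, which by Remark~\ref{rem:LatticePropInF}(i) is exactly the assertion that $f$ is lattice-bounded above on the neighborhood $U$.

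There is no genuine obstacle here beyond correctly matching the quantifiers: the existential ``$\exists z\in V\ \forall x\in U:z\in f(x)$'' supplied by upper lattice-semicontinuity is precisely the uniform membership ``$\exists a\in Z\ \forall x\in U:a\in f(x)$'' demanded by lattice-boundedness above. The only point worth a word of care is that the neighborhood $V$ plays no essential role and can be discarded once $z$ has been extracted; the assumption $x_0\in\dom f$ is what guarantees that \eqref{CharUSC} can be triggered at all, since without a point $z_0\in f(x_0)$ the characterization would be vacuous.
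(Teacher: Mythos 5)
Your proof is correct and is essentially identical to the paper's own argument: both pick $z_0\in f(x_0)$ using $x_0\in\dom f$, apply the characterization \eqref{CharUSC} with an arbitrary neighborhood $V\in\V(z_0)$, and read off the uniform element $z\in f(x)$ for all $x\in U$ as the required bound. Nothing further is needed.
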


\begin{proof}
Since $x_0\in\dom f$, $f(x_0)$ is nonempty. Choose $z_0\in f(x_0)$ and $V\in\V(z_0)$ arbitrarily. By upper lattice-semicontinuity there is some $U\in \NN(x_0)$ and $z\in V$ such that $z\in f(x)$ for all $x\in U$. Hence f is lattice-bounded above on $U$.  
\end{proof}

\begin{proposition}
If $f:X\to\F(Z,C)$ is convex and lattice-bounded above on some neighborhood of $x_0\in X$, then $f$ is upper lattice-semicontinuous at $x_0$.
\end{proposition}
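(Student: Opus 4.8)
The plan is to verify the characterization \eqref{CharUSC} of upper lattice-semicontinuity directly. So I would fix an arbitrary $z_0\in f(x_0)$ and an arbitrary $V\in\V(z_0)$, and aim to produce a neighborhood $U$ of $x_0$ together with a point $z\in V$ satisfying $z\in f(x)$ for all $x\in U$. The lattice-boundedness hypothesis, read through Remark \ref{rem:LatticePropInF}(i), supplies a single point $a\in Z$ and a neighborhood $W$ of $x_0$ with $a\in f(x)$ for every $x\in W$; in particular $a\in f(x_0)$, so $f(x_0)\neq\emptyset$ and there is indeed something to check. The whole idea is to interpolate between the globally available point $a$ and the target point $z_0$, letting convexity do the work.

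Concretely, I would set $W':=W-x_0\in\NN(0)$ and consider the candidate point $z:=z_0+t(a-z_0)=(1-t)z_0+ta$ for a parameter $t\in(0,1)$ to be fixed later, together with the candidate neighborhood $U:=x_0+tW'$. For $x\in U$ write $x=x_0+tw$ with $w\in W'$, so that $x_0+w\in W$ and $x=(1-t)x_0+t(x_0+w)$. Exactly as in the proof that efficiency implies lower continuity, $C$-convexity of an $\F(Z,C)$-valued function yields $f(x)\supseteq(1-t)f(x_0)+tf(x_0+w)$; since $z_0\in f(x_0)$ and $a\in f(x_0+w)$, this forces $z=(1-t)z_0+ta\in f(x)$. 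Thus the chosen $z$ lies in $f(x)$ for every $x\in U$, and this holds for any value of $t$.

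It then remains to pin down $t$ so that $z\in V$. Since $V-z_0$ is a neighborhood of $0$ and the map $t\mapsto t(a-z_0)$ is continuous with value $0$ at $t=0$, there is some $t\in(0,1)$ with $t(a-z_0)\in V-z_0$, that is, $z=z_0+t(a-z_0)\in V$. Fixing such a $t$, both requirements of \eqref{CharUSC} are met, and the proof is complete.

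I do not anticipate a serious obstacle here, since the argument is a direct convexity computation closely parallel to the preceding efficiency-to-lower-continuity proposition. The only point that needs a little care is the coupling of the single parameter $t$ in two roles at once --- as the interpolation weight in $z=(1-t)z_0+ta$ and as the scaling factor in $U=x_0+tW'$ --- arranged so that shrinking $t$ simultaneously pushes $z$ toward $z_0$ (hence into $V$) while keeping the representation $x=(1-t)x_0+t(x_0+w)$ with $x_0+w\in W$ valid for every $x\in U$.
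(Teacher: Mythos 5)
Your proposal is correct and follows essentially the same argument as the paper's proof: obtain $a\in f(x_0+w)$ on a neighborhood, interpolate $z=(1-t)z_0+ta$ with $U=\cb{x_0}+tW$, and use $C$-convexity to get $z\in f(x)$ on $U$, choosing $t$ small enough that $z\in V$. The only differences (deferring the choice of $t$ to the end, and working with $W-x_0$ rather than a balanced neighborhood of $0$) are cosmetic.
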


\begin{proof}
By the boundedness assumption there exist a balanced neighborhood $W$ of $0$ in $X$ and some $a\in Z$ with $a\in f(x_0+w)$ for all $w \in W$. We will show that \eqref{CharUSC} holds. 

Let $z_0\in f(x_0)$ and $V\in\V(z_0)$ be chosen arbitrarily. Then $V-\cb{z_0}\in\V(0)$, and there exists some $t\in (0,1)$ such that $t(a-z_0)\in V-\cb{z_0}$, i.e., $z:=(1-t)z_0+ta \in V$. Let $U:=\cb{x_0}+tW$. For every $x\in U$ there is some $w\in W$ with $x=x_0+tw=(1-t)x_0+t(x_0+w)$. From the convexity of $f$ we obtain 
\[
f(x)\supseteq (1-t)f(x_0)+tf(x_0+w)\ni (1-t)z_0+ta =z.
\]
Hence \eqref{CharUSC} is satisfied.
\end{proof}

%
%
%
%
\begin{proposition}
If $f:X\to\F(Z,C)$ is convex and lower continuous at $x_0\in \dom f$ then $f$ is lower lattice-semicontinuous at $x_0$.
\end{proposition}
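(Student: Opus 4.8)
The plan is to verify lower lattice-semicontinuity through the characterization in Remark \ref{rem:LatticePropInF}(ii), i.e. to show the ``closedness'' inclusion $f(x_0)\supseteq\bigcap_{U\in\NN(x_0)}\cl\bigcup_{x\in U}f(x)$. Accordingly I would fix a point $z$ in the right-hand side and prove $z\in f(x_0)$. Unwinding that membership, for every $U\in\NN(x_0)$ and every $V\in\V(z)$ there is some $x\in U$ with $f(x)\cap V\neq\emptyset$; indexing by the directed set $\NN(x_0)\times\V(z)$ yields a net $x_\alpha\to x_0$ together with points $z_\alpha\in f(x_\alpha)$, $z_\alpha\to z$. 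Since $x_0\in\dom f$, I also fix once and for all some $z_1\in f(x_0)$, which is precisely the kind of point that lower continuity controls. Note that each value $f(x_0)$ is closed, being an element of $\F(Z,C)$, and that lower continuity at $x_0$ makes $f$ nonempty-valued near $x_0$, so the auxiliary points produced below genuinely exist.

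The core of the argument is a reverse convex combination. For a fixed $t\in(0,1)$ I set $y_\alpha:=x_0+\tfrac{1-t}{t}(x_0-x_\alpha)$, so that $x_0=(1-t)x_\alpha+t\,y_\alpha$ and, for this fixed $t$, $y_\alpha\to x_0$ because $\tfrac{1-t}{t}$ is a fixed scalar and $x_0-x_\alpha\to 0$. Convexity of $f$ gives $f(x_0)\supseteq(1-t)f(x_\alpha)+t\,f(y_\alpha)$. Into the first factor I feed $z_\alpha$; into the second I feed a point supplied by lower continuity: given any neighborhood $W$ of $0$ in $Z$, l.c.\ at $x_0$ applied to $z_1$ produces a neighborhood on which $f(\cdot)$ meets $z_1+W$, so for $\alpha$ large there is $w_\alpha\in f(y_\alpha)\cap(z_1+W)$. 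Then $(1-t)z_\alpha+t\,w_\alpha\in f(x_0)$, and since $(1-t)z_\alpha\to(1-t)z$ while $t\,w_\alpha\in t z_1+tW$, this point eventually lies in $((1-t)z+t z_1)+W'$ for a suitably enlarged neighborhood $W'$ of $0$. As $W$ (hence $W'$) is arbitrary and $f(x_0)$ is closed, I conclude $(1-t)z+t z_1\in f(x_0)$ for every $t\in(0,1)$.

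Finally I would let $t\to 0^+$: since $(1-t)z+t z_1=z+t(z_1-z)\to z$ and every member of this family lies in the closed set $f(x_0)$, the limit gives $z\in f(x_0)$, as required. The one point that must be handled carefully is the order of the two limits. The scalar $\tfrac{1-t}{t}$ blows up as $t\to 0$, so $y_\alpha\to x_0$ holds only for $t$ fixed; one must therefore first run the net $\alpha$ (invoking closedness of $f(x_0)$ to absorb the neighborhood slack $W$) to place $(1-t)z+t z_1$ in $f(x_0)$, and only afterwards push $t\to 0$ (invoking closedness of $f(x_0)$ a second time). I expect this interchange to be the main obstacle; once it is organized correctly, the proof needs nothing beyond convexity of $f$, the closedness of its values, and the defining property of lower continuity, and in particular requires no local convexity or separation argument in $Z$.
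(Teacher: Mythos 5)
Your proof is correct, and at its core it is the same argument as the paper's: both rest on the reverse convex combination $x_0=(1-t)x_\alpha+t\,y_\alpha$, feeding lower continuity (applied to a fixed element $z_1\in f(x_0)$) into the auxiliary slot $f(y_\alpha)$, and then using convexity of $f$ together with closedness of $f(x_0)$. The paper merely runs this contrapositively --- fixing a single sufficiently small $t$ (chosen so that $t(y_0-z_0)\in V'$) and producing the uniform neighborhoods required in \eqref{CharLSC} by contradiction --- whereas you argue forwards with nets and a final passage $t\to 0^+$, targeting the equivalent inclusion form of lower lattice-semicontinuity from Remark \ref{rem:LatticePropInF}(ii).
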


\begin{proof}
We want to show that \eqref{CharLSC} holds. Let $z_0\not\in f(x_0)$. Since $f(x_0)$ is closed there is a neighborhood $V$ of $0$ in $Z$ such that $(\cb{z_0}+V)\cap f(x_0)=\emptyset$. Moreover, there is some balanced neighborhood $V'$ of $0$ in $Z$ with $V'+V'+V'\subseteq V$. Since $x_0\in\dom f$ there is some $y_0\in f(x_0)$ and some $t\in(0,1)$ with $t(y_0-z_0)\in V'$. 

By lower continuity there is some neighborhood $W$ of $0$ in $X$ such that $f(x_0-w)\cap (\cb{y_0}+V')\neq \emptyset$ for every $w\in W$.
We have
\begin{equation}\label{Gl2}
\begin{split}
\cb{z_0}+V&\supseteq \cb{z_0}+V'+V'+V'\supseteq \cb{z_0}+V'+(1-t)V'+tV'\\
&\supseteq \cb{z_0}+\cb{t(y_0-z_0)}+(1-t)V'+tV'\\
&=(1-t)\of{\cb{z_0}+V'}+t\of{\cb{y_0}+V'}.
\end{split}
\end{equation}
Let $x=x_0+\frac{t}{1-t}w$ for some $w\in W$, i.e., $x_0=(1-t)x+t(x_0-w)$, and let $z\in\cb{z_0}+V'$. We want to show that $z\not\in f(x)$. 
Assuming on the contrary that $z\in f(x)$, we obtain
\[
f(x_0)\supseteq (1-t)f(x)+tf(x_0-w)\ni (1-t)z+ty
\]
with some $y\in f(x_0-w)\cap (y_0+V')$. 
From \eqref{Gl2} we obtain $(1-t)z+ty\in \cb{z_0}+V$, contradicting $(\cb{z_0}+V)\cap f(x_0)=\emptyset$.
Consequently, $z\not\in f(x)$ for all $x\in \cb{x_0}+ \frac{t}{1-t}W=:U\in \NN(x_0)$ and all $z\in \cb{z_0}+V'\in\V(z_0)$. Hence \eqref{CharLSC} is satisfied.
\end{proof}


\begin{proposition}
If $f:X\to\F(Z,C)$ is upper lattice-semicontinuous at $x_0\in X$ then $f$ is lower continuous at $x_0$ as well. 

The converse statement is true if $\Int C \neq\emptyset$.
\end{proposition}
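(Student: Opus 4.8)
The plan is to handle the two implications separately, using throughout the pointwise characterization \eqref{CharUSC} of upper lattice-semicontinuity in place of its lattice formulation. For the forward implication I expect no real work. Given $z_0\in f(x_0)$ and $V\in\V(z_0)$, condition \eqref{CharUSC} supplies a neighborhood $U\in\NN(x_0)$ together with a \emph{single} point $z\in V$ satisfying $z\in f(x)$ for every $x\in U$; this same $z$ witnesses $f(x)\cap V\neq\emptyset$ for all $x\in U$, which is exactly lower continuity at $x_0$. Informally, u.s.c. is the stronger requirement that the witnessing point be chosen independently of $x$, and the forward implication just discards this uniformity.

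The content of the statement is the converse, where the hypothesis $\Int C\neq\emptyset$ is precisely what lets me manufacture such an $x$-independent witness from the merely pointwise data provided by lower continuity. First I would fix $k\in\Int C$ and a balanced neighborhood $W$ of $0$ in $Z$ with $k+W\subseteq C$; balancedness of $W$ then also yields $k-W\subseteq C$. Given $z_0\in f(x_0)$ and $V\in\V(z_0)$, I would pick $\lambda>0$ small enough that $z:=z_0+\lambda k\in V$, and apply lower continuity at $x_0$ to the neighborhood $z_0+\lambda W\in\V(z_0)$, obtaining $U\in\NN(x_0)$ with $f(x)\cap\of{z_0+\lambda W}\neq\emptyset$ for every $x\in U$.

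The key step is then to check that this fixed $z$ already lies in each $f(x)$. For $x\in U$ choose $w_x\in f(x)\cap\of{z_0+\lambda W}$ and write $w_x=z_0+\lambda w$ with $w\in W$. Then $z-w_x=\lambda\of{k-w}\in\lambda\of{k-W}\subseteq\lambda C=C$, so $z\in w_x+C\subseteq f(x)+C\subseteq f(x)$, where the final inclusion holds because $f(x)=\cl\of{f(x)+C}\supseteq f(x)+C$. Hence $z\in V$ belongs to $f(x)$ for all $x\in U$, which is precisely \eqref{CharUSC}.

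I expect the genuine obstacle to be confined to the converse, and within it to the single device that drives the argument: displacing the target point by $\lambda k$ into the interior direction converts the approximate membership $w_x\in z_0+\lambda W$ into an exact order relation $w_x\le_C z$, which upper-closedness of $f(x)$ then upgrades to honest membership $z\in f(x)$. What makes this work is scaling $W$ by the \emph{same} $\lambda$ used to form $z$; choosing the lower-continuity neighborhood independently of $\lambda$ would destroy the cone inclusion $z-w_x\in C$.
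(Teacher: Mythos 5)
Your proposal is correct and follows essentially the same route as the paper: the forward direction simply discards the uniformity of the witness $z$ in \eqref{CharUSC}, and the converse displaces $z_0$ by a small multiple $\lambda k$ of an interior point $k\in\Int C$, applies lower continuity to a scaled neighborhood $z_0+\lambda W$, and uses $f(x)+C\subseteq f(x)$ to upgrade approximate membership to exact membership. The only cosmetic difference is that you take $W$ balanced to get $k-W\subseteq C$, whereas the paper applies lower continuity to $z_0-tW$ and uses $k+W\subseteq C$ directly.
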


\begin{proof}
(i) By upper lattice-semicontinuity, \eqref{CharUSC} holds.  Let $z_0\in f(x_0)$ and $V\in\V(z_0)$.
By \eqref{CharUSC} there is some $U\in \NN(x_0)$ and $z\in V$ such that $z\in f(x)$ for all $x\in U$. Hence $f(x)\cap V \neq\emptyset$ for all $x\in U$.

(ii) Let $z_0\in f(x_0)$, $V\in \V(0)$ be chosen arbitrarily and take $k\in\Int C$. Then there is some $t>0$ such that $tk\in V$. Since $k\in\Int C$ there is some neighborhood $W$ of $0$ in $Z$ with $\cb{k}+W\subseteq C$. Since $C$ is a cone we have $t\cb{k}+tW\in C$, and $-tW$ is a neighborhood of $0$ in $Z$ as well. Since $f$ is lower continuous at $x_0$, there is some $U\in\NN(x_0)$ such that $f(x)\cap (\cb{z_0}-tW)\neq \emptyset$ for every $x\in U$, i.e., $z_0\in f(x)+tW$ for every $x\in U$. 
Hence $z_0+tk\in f(x)+tW+t\cb{k}\subseteq f(x)+C\subseteq f(x)$ for every $x\in U$, which proves upper lattice-semicontinuity. 
\end{proof}

\begin{proposition}
If $f:X\to\F(Z,C)$ is Hausdorff upper continuous at $x_0\in X$ then $f$ is lower lattice-semicontinuous at $x_0$.
\end{proposition}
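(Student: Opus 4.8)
The plan is to verify the characterization \eqref{CharLSC} of lower lattice-semicontinuity directly. The only hypothesis available is Hausdorff upper continuity, and the only structural fact about the value $f(x_0)$ is that it is closed, being an element of $\F(Z,C)$; crucially, no convexity is assumed or needed. So the idea is to exploit closedness to separate a given exterior point $z_0\notin f(x_0)$ from $f(x_0)$ by a zero-neighborhood, and then to use H-u.c. to propagate this separation to all nearby $x$. The H-u.c. inclusion $f(x)\subseteq f(x_0)+V$ controls the values $f(x)$ \emph{from one side}, which is exactly what is required to keep $f(x)$ away from a point $z_0$ sitting strictly outside $f(x_0)$.

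Concretely, I would fix $z_0\notin f(x_0)$ and first use closedness of $f(x_0)$ to obtain $W_1\in\V(0)$ with $\of{\cb{z_0}+W_1}\cap f(x_0)=\emptyset$. Then I would pick a balanced $W\in\V(0)$ with $W+W\subseteq W_1$, and apply Hausdorff upper continuity at $x_0$ to the zero-neighborhood $W$, producing $U\in\NN(x_0)$ with $f(x)\subseteq f(x_0)+W$ for every $x\in U$. The candidate neighborhood of $z_0$ is $V:=\cb{z_0}+W\in\V(z_0)$, and the claim to establish is that no $z\in V$ belongs to any $f(x)$ with $x\in U$.

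The heart of the argument is then a short contradiction. Suppose $z\in f(x)\cap V$ for some $x\in U$. The inclusion $f(x)\subseteq f(x_0)+W$ gives $z=a+w$ with $a\in f(x_0)$, $w\in W$, while $z\in V$ gives $z=z_0+w'$ with $w'\in W$. Eliminating $z$ yields $a=z_0+w'-w\in\cb{z_0}+W+W\subseteq\cb{z_0}+W_1$, where $-W=W$ because $W$ is balanced. Since $a\in f(x_0)$, this contradicts $\of{\cb{z_0}+W_1}\cap f(x_0)=\emptyset$. Hence $z\notin f(x)$ for all $x\in U$ and all $z\in V$, which is precisely \eqref{CharLSC}, so $f$ is lower lattice-semicontinuous at $x_0$.

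I do not expect a serious obstacle here; the argument also covers the degenerate case $x_0\notin\dom f$ automatically, since then $f(x_0)=\emptyset$, H-u.c. forces $f(x)=\emptyset$ on a neighborhood, and the contradiction in the last step is immediate. The only point requiring a little care is the bookkeeping of neighborhood sizes: one must shrink $W_1$ to a \emph{balanced} $W$ with $W+W\subseteq W_1$ so that the two representations of $z$ combine correctly and $-W=W$ can be used. This is the set-valued counterpart of the elementary scalar fact that one-sided upper closeness of values is already enough to yield outer semicontinuity.
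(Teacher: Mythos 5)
Your proof is correct and follows essentially the same route as the paper's: separate $z_0$ from the closed set $f(x_0)$ by a zero-neighborhood, then use the Hausdorff upper continuity inclusion $f(x)\subseteq f(x_0)+W$ to keep all values $f(x)$, $x\in U$, away from a neighborhood of $z_0$. The paper merely compresses your explicit ``balanced $W$ with $W+W\subseteq W_1$'' bookkeeping into the single assertion that closedness yields $W$ with $\of{\cb{z_0}+W}\cap\of{f(x_0)+W}=\emptyset$.
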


\begin{proof}
Let $z_0\not\in f(x_0)$. Since $f(x_0)$ is closed there exists a neighborhood $W$ of $0$ in $Z$ such that $(z_0+W)\cap (f(x_0)+W)=\emptyset$. By Hausdorff upper continuity there exists some $U\in \NN(x_0)$ such that $f(x)\subseteq f(x_0)+W$ for all $x\in U$. Hence $z\not \in f(x)$ for all $x\in U$ and all $z\in \cb{z_0}+W$. Thus $f$ is lower lattice-semicontinuous at $x_0$.
\end{proof}

\begin{proposition}
If $f:X\to\F(Z,C)$ and there is some $z_0\in Z$ such that $(x_0,z_0)\in \Int(\gr f)$ then $f$ is lattice-bounded above on some neighborhood of $x_0$.
If $\Int C\neq \emptyset$ then the converse is true as well. 
\end{proposition}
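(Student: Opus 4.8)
The plan is to prove the two implications separately, in both directions exploiting the product structure of $X\times Z$: a point $(x_0,z_0)$ lies in $\Int(\gr f)$ precisely when there are $U\in\NN(x_0)$ and $V\in\V(z_0)$ with $U\times V\subseteq\gr f$, which by definition of the graph means $z\in f(x)$ for all $x\in U$ and all $z\in V$.

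For the forward implication I would start from $(x_0,z_0)\in\Int(\gr f)$ and choose such neighborhoods $U$ and $V$. In particular $z_0\in f(x)$ for every $x\in U$, so $U$ is a neighborhood of $x_0$ on which the constant element $a:=z_0$ belongs to every value $f(x)$. By the characterization of lattice-boundedness from above in Remark \ref{rem:LatticePropInF}(i), this is exactly lattice-boundedness of $f$ above on $U$. This direction uses neither convexity nor $\Int C\neq\emptyset$.

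For the converse, assume $\Int C\neq\emptyset$ and that $f$ is lattice-bounded above on a neighborhood $M$ of $x_0$. By Remark \ref{rem:LatticePropInF}(i) there is some $a\in Z$ with $a\in f(x)$ for all $x\in M$. The key step is to upgrade the single point $a$ to an open neighborhood contained in every value $f(x)$, and this is where $\Int C\neq\emptyset$ enters. Since $f(x)\in\F(Z,C)$, we have $a+C\subseteq f(x)+C\subseteq f(x)$ for all $x\in M$. Choosing $k\in\Int C$ and a neighborhood $V\in\V(0)$ in $Z$ with $k+V\subseteq C$, and setting $z_0:=a+k$, we obtain $z_0+V=a+(k+V)\subseteq a+C\subseteq f(x)$ for every $x\in M$. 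Finally, since $M$ is a neighborhood of $x_0$ and $z_0+V$ is a neighborhood of $z_0$, the product $M\times(z_0+V)\subseteq\gr f$ is a neighborhood of $(x_0,z_0)$, whence $(x_0,z_0)\in\Int(\gr f)$.

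I expect no serious obstacle; the only genuinely load-bearing observation is the thickening step $a+C\subseteq f(x)$ combined with $\Int C\neq\emptyset$, which converts membership of the single point $a$ into membership of a whole $Z$-neighborhood $z_0+V$ and makes the hypothesis $\Int C\neq\emptyset$ indispensable---without interior points of $C$ a single value $a$ need not carry any neighborhood inside the sets $f(x)$.
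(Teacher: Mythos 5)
Your proof is correct and follows essentially the same route as the paper's own proof: the forward direction extracts a product neighborhood $U\times V\subseteq\gr f$ and takes $a:=z_0$, and the converse uses $k\in\Int C$ with $\cb{k}+V\subseteq C$ to thicken the point $a$ into the neighborhood $\cb{a+k}+V\subseteq f(x)+C\subseteq f(x)$. No gaps; the argument matches the paper in both structure and the key step.
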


\begin{proof}
If $(x_0,z_0)\in \Int(\gr f)$, then there are neighborhoods $U\in\NN(x_0)$ and $V\in\V(z_0)$ such that $z\in f(x)$ for all $x\in U$ and all $z\in V$. In particular, $z_0\in f(x)$ for all $x\in U$.

Now we will prove the converse. Since $\Int C\neq \emptyset$ there is some $k\in C$ and a neighborhood $V$ of $0$ in $Z$ with $\cb{k}+V\subseteq C$. If there is some neighborhood $U$ of $x_0$ and some $a\in Z$ with $a\in f(x)$ for all $x\in U$ then $\cb{a+k}+V\subseteq f(x)+C\subseteq f(x)$ for all $x\in U$. Hence $(x_0,a+k)\in\Int(\gr f)$. 
\end{proof}

The above statements can be summarized in the following diagram for $f:X\to\F(Z,C)$. Again, all properties are supposed to be valid locally at one point $x_0\in\dom f$. In this context, $\Int (\gr f)\neq \emptyset$ should be understood as ''there is some $z_0\in Z$ with $(x_0,z_0)\in \Int(\gr f)$''.
\[
\begin{xy} \xymatrix{
\Int (\gr f)\neq \emptyset\; \ar@<-5pt>@2{->}[r] & 
\;\text{ l-b.a. }\; \ar@<-5pt>@2{->}[l]_/-10pt/{\Int C\neq \emptyset} \ar@<-5pt>@2{->}[dd]_{f\text{ convex}} \ar@<-5pt>@2{->}[r] &
\;\text{ eff. }\; \ar@<-5pt>@2{->}[l]_/2pt/{\Int C\neq \emptyset} \ar@<5pt>@2{<-}[dd]^{\text{(BN)}} &
&
\\
\\
&
\; \text{ u.s.c. }\; \ar@<-5pt>@2{->}[r] \ar@2{->}@<-5pt>[uu] &
\;\text{ l.c. }\; \ar@<-5pt>@2{->}[l]_/-3pt/{\Int C\neq \emptyset} \ar@<-5pt>@2{->}[dd]_{f\text{ convex}} \ar@<5pt>@2{<-}[r] \ar@2{<-}@<5pt>[uu]^{f\text{ convex}} &
\;\text{ H-l.c. }\; \ar@2{->}[dd]^{\begin{subarray}{l}f\text{ convex}\\ f(x_0)\subseteq D+C\end{subarray}} \ar@<5pt>@2{<-}[l]^/3pt/{\substack{f\text{ convex}\\ f(x_0)\subseteq D+C}} \\
\\
& 
&
\;\text{ l.s.c. }\;\ar@2{<-}[r] &
\;\text{ H-u.c. }\; &
\;\text{ u.c. }\; \ar@2{->}[l] 
}\end{xy}
\]

\begin{remark}\label{rem:contvssemicont}
As one can see in the above diagram, (Hausdorff) lower continuity is closely related to upper lattice-semicontinuity.
This is based on the fact that in the classic concepts ''lower'' is related to subsets, but subsets are greater elements in the lattice $(\F(Z,C),\supseteq)$.
\end{remark}

The following examples provide counterexamples for most of the missing implications.

\begin{example}
Let $X=\R$, $Z=\R^2$ and $C=\cb{z\in \R^2 \st z_1=0, z_2\ge 0}$. For all $x_0\in \R$, the function $f:X\to \F(Z,C)$ defined by 
$f(x)=\cb{(x,0)^T}+C$ is convex and Hausdorff lower continuous but not upper lattice-semicontinuous at $x_0$.  
\end{example}

\begin{example}
Let $X=\R$ and define
\[
f(x)=\begin{cases} C & \text{ if }x\ge 0 \\ \emptyset & \text{ if }x<0. \end{cases}
\]
$f:X\to\F(Z,C)$ is convex and upper continuous at $x_0=0$, but $f$ is not efficient at $x_0=0$.
\end{example}

\begin{example}\label{ex:setA}
Let $A:=\cb{z\in\R^2 : z_2\ge z_1^2}$ and $f:\R\to \F(\R^2,\R^2_+)$ be defined by
\[
f(x)=\begin{cases} xA+\R^2_+ & \text{ if }x\ge 0 \\ \emptyset & \text{ if }x<0. \end{cases}
\] 
The function $f$ is convex, upper and lower lattice-semicontinuous at $x_0=1$ but neither Hausdorff upper continuous nor Hausdorff lower continuous at $x_0=1$ as the following lemma shows. 
\end{example}

\begin{lemma}
The set $A$ from Example \ref{ex:setA} has the following property 
\begin{equation}
\forall \varepsilon >0 :  (1+\varepsilon )A+\R^2_+\not\subseteq A+\R^2_+ + B_1(0).
\end{equation}
\end{lemma}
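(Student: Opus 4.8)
The plan is to exploit that $A$ is the epigraph of $x\mapsto x^2$, so that $(1+\eps)A=\cb{(w_1,w_2)\st w_2\ge w_1^2/(1+\eps)}$ is the epigraph of a strictly \emph{flatter} parabola. Consequently $(1+\eps)A$ reaches further into the lower-left region than $A$ does, and the gap between the two boundary parabolas widens without bound. I would therefore produce, for each fixed $\eps>0$, a single point $P_t\in(1+\eps)A\subseteq(1+\eps)A+\R^2_+$ whose Euclidean distance to $A+\R^2_+$ exceeds $1$; the natural candidate is the boundary point $P_t=(-t,t^2/(1+\eps))$ with $t>0$ large, and it suffices to show $\dist(P_t,A+\R^2_+)>1$ for some such $t$.

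The central difficulty is that the vertical gap between the two parabolas at abscissa $-t$ is of order $t^2$, but this overestimates the distance to the set $A+\R^2_+$, whose boundary curves away; computing the true Euclidean distance to a parabolic region directly is awkward. The device that makes the estimate clean is to replace $A+\R^2_+$ by a supporting half-plane. For $x_0\le 0$ I consider the tangent half-plane $H_{x_0}=\cb{(x,y)\st y\ge 2x_0x-x_0^2}$, and the first key step is to show $A+\R^2_+\subseteq H_{x_0}$: since $(z_1-x_0)^2\ge 0$ gives $z_1^2\ge 2x_0z_1-x_0^2$, any $(x,y)=(z_1+a,z_2+b)$ with $z_2\ge z_1^2$ and $a,b\ge 0$ satisfies $y\ge 2x_0z_1-x_0^2+b$, and the sign conditions $x_0\le 0$, $a,b\ge 0$ upgrade this to $y\ge 2x_0x-x_0^2$. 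The hypothesis $x_0\le 0$ is precisely what keeps the half-plane stable under adding $\R^2_+$.

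With this containment in hand, $\dist(P_t,A+\R^2_+)\ge\dist(P_t,H_{x_0})$, which is a routine point-to-line computation. Choosing $x_0=-t$ (the parabola point directly above $P_t$), the boundary line is $2tx+y+t^2=0$, and the point-to-line formula yields $\dist(P_t,H_{-t})=\frac{t^2\eps/(1+\eps)}{\sqrt{4t^2+1}}$, which behaves like $\frac{t\eps}{2(1+\eps)}$ and hence tends to $\infty$ as $t\to\infty$. Thus for $t$ large enough $\dist(P_t,A+\R^2_+)>1$, so $P_t\notin A+\R^2_++B_1(0)$ while $P_t\in(1+\eps)A+\R^2_+$, which is the asserted non-inclusion.

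I expect the half-plane containment step to be the main obstacle, both in choosing the right enclosing half-plane and in the sign bookkeeping that forces $x_0\le 0$; once that is secured, the distance estimate and the concluding argument are mechanical.
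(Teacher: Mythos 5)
Your proposal is correct and follows essentially the same route as the paper: both pick a witness point far out on the left branch of the boundary of $(1+\varepsilon)A$ (your $P_t=(-t,t^2/(1+\varepsilon))$ is exactly the paper's $(1+\varepsilon)\bar z$ up to reparametrization), bound its distance to $A+\R^2_+$ from below by the distance to a tangent line of the parabola $z_2=z_1^2$, and observe via the point-to-line formula that this distance tends to $\infty$ as $t\to\infty$. The only difference is presentational: you spell out the supporting half-plane containment $A+\R^2_+\subseteq H_{x_0}$ for $x_0\le 0$, which the paper asserts only in a parenthetical remark.
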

\begin{proof}
Take an arbitrary $\varepsilon >0$. We will show that there exists some $t>0$ such that for the point $\bar z=(-t,t^2)^T\in A$ the distance of $(1+\varepsilon)\bar z$ to $A+\R^2_+$ is greater than $1$. In fact, the distance of $(1+\varepsilon)\bar z$ to the tangent to the graph of the function $z_2=z_1^2$ through the point $\bar z$ (which is smaller than the distance to $A+\R^2_+$) equals
\[\frac{\varepsilon t^2}{\sqrt{1+4t^2}},\]
tending to $\infty$ if $t$ tends to $\infty$.
\end{proof}

As in the scalar case, we can show that for convex functions boundedness from above and upper semicontinuity carries over from one point of the domain to any other point in the interior of the domain.

\begin{lemma}
Let $f:X\to\F(Z,C)$ be convex and $x_0\in \dom f$. If $f$ is lattice-bounded above on a neighborhood of $x_0$ or efficient at $x_0$ or upper lattice-semicontinuous at $x_0$ or lower continuous at $x_0$ then $f$ has the corresponding property at all $x\in\Int(\dom f)$.
\end{lemma}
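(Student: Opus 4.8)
The plan is to run a single convexity/interior argument and specialize it to each of the four properties. Fix $x_0\in\dom f$ where the relevant property holds and let $x_1\in\Int(\dom f)$. Since $x_1$ is interior, I can push the ray from $x_0$ through $x_1$ a little past $x_1$ and stay in $\dom f$: there are $t\in(0,1)$ and $x_2=x_1+\frac{1-t}{t}(x_1-x_0)\in\dom f$ with $x_1=(1-t)x_0+tx_2$, provided $t$ is close enough to $1$. For a neighborhood $W$ of $0$ in $X$ the points $x=(1-t)(x_0+w)+tx_2=x_1+(1-t)w$ with $w\in W$ fill out the neighborhood $x_1+(1-t)W$ of $x_1$, and convexity yields $f(x)\supseteq(1-t)f(x_0+w)+tf(x_2)$. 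So the strategy is always the same: combine a point of $f(x_0+w)$ (controlled near $x_0$ by the hypothesis) with a fixed point of $f(x_2)$ to manufacture a point of $f(x)$.

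For lattice-boundedness above this finishes at once. If $a\in f(x_0+w)$ for all $w\in W$ and I fix any $z_2\in f(x_2)$, then the single point $(1-t)a+tz_2$ lies in $f(x)$ for every $x\in x_1+(1-t)W$, so $f$ is lattice-bounded above on a neighborhood of $x_1$. Efficiency is the identical computation with a bounded witness: choosing $\beta_w\in f(x_0+w)\cap B$ gives $(1-t)\beta_w+tz_2\in f(x)\cap\ofg{(1-t)B+\cb{tz_2}}$, and $(1-t)B+\cb{tz_2}$ is bounded. For upper lattice-semicontinuity I would not argue directly at all: by the proposition that a convex function which is u.s.c. at a point of its domain is lattice-bounded above on a neighborhood, together with the proposition that a convex function lattice-bounded above on a neighborhood is u.s.c.\ there, the two properties are equivalent at domain points, so u.s.c.\ propagates along with lattice-boundedness above.

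The lower continuous case is the one I expect to be the real obstacle, because lower continuity forces me to approach an \emph{arbitrary prescribed} $z_1\in f(x_1)$, while the trick above only delivers points of $f(x)$ near a convex combination $(1-t)z_0+tz_2$, which need not equal $z_1$; and one cannot in general decompose $z_1$ through $f(x_0)$ and $f(x_2)$, since $f(x_1)\supseteq(1-t)f(x_0)+tf(x_2)$ is rarely an equality. My way around this is to first \emph{build} a value of $f$ close to $z_1$ at a point just beyond $x_1$. Fix $z_0\in f(x_0)$, pick $x_3=x_1+\delta(x_1-x_0)\in\dom f$ and $z_3\in f(x_3)$, and for small $\beta\in(0,1)$ put $x_2=(1-\beta)x_1+\beta x_3=x_1+\beta\delta(x_1-x_0)$ and $z_2=(1-\beta)z_1+\beta z_3$; convexity gives $z_2\in f(x_2)$, while $z_2-z_1=\beta(z_3-z_1)$ can be forced into any prescribed neighborhood of $0$ by shrinking $\beta$.

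With this $x_2$ one has $x_1=(1-r)x_0+rx_2$ for $r=(1+\beta\delta)^{-1}$, so the interior trick applies: writing $x=(1-r)p+rx_2$ with $p=x_0+\frac{1}{1-r}(x-x_1)$ near $x_0$, lower continuity at $x_0$ supplies $z_p\in f(p)$ close to $z_0$, and $(1-r)z_p+rz_2\in f(x)$. The final estimate I would carry out is the splitting $(1-r)z_p+rz_2-z_1=(1-r)(z_p-z_0)+(1-r)(z_0-z_1)+r(z_2-z_1)$, where the first summand is controlled by the neighborhood that lower continuity at $x_0$ provides, the third by the choice of $\beta$ making $z_2$ near $z_1$, and the middle one $(1-r)(z_0-z_1)$ again by the smallness of $\beta$ (note $1-r=\beta\delta/(1+\beta\delta)$). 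This puts the constructed point into $z_1+V$, giving $f(x)\cap(z_1+V)\neq\emptyset$ on a neighborhood of $x_1$. The crux, and the only genuinely new idea beyond the scalar case, is exactly this two-step construction for the lower continuous case: manufacture an auxiliary near-$z_1$ value slightly past $x_1$, and only then transport it back through the point $x_0$ at which continuity is assumed.
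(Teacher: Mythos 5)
Your proposal is correct. For three of the four properties it is essentially the paper's own argument: the ray-pushing step $x_2=x_1+\tfrac{1-t}{t}(x_1-x_0)\in\dom f$, the identity $x=(1-t)(x_0+w)+tx_2$ sweeping out the neighborhood $x_1+(1-t)W$, and the inclusion $f(x)\supseteq(1-t)f(x_0+w)+tf(x_2)$ are exactly what the paper uses (up to the reparametrization $\tfrac{t}{1+t},\tfrac{1}{1+t}$ versus $1-t,t$), specialized to efficiency via the bounded set $(1-t)B+\cb{tz_2}$, to lattice-boundedness by taking $B=\cb{a}$, and in both treatments the upper lattice-semicontinuity case is reduced to lattice-boundedness through the previously established equivalence of the two properties for convex functions at points of the domain.

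The genuine divergence is the lower continuity case, and here your route is different because the paper does not prove this case at all: it cites \cite[Theorem 2.6.1]{GRTZ}. Your self-contained argument is sound. You correctly identify why the one-step argument fails (it only produces points of $f(x)$ near a fixed combination $(1-t)z_0+tz_2$, not near an arbitrarily prescribed $z_1\in f(x_1)$), and your two-step fix works: convexity along the segment from $x_1$ to $x_3=x_1+\delta(x_1-x_0)$ yields $z_2=(1-\beta)z_1+\beta z_3\in f(x_2)$ with $z_2-z_1=\beta(z_3-z_1)$ as small as desired, and then lower continuity at $x_0$ transports through $p=x_0+\tfrac{1}{1-r}(x-x_1)$ to give $(1-r)z_p+rz_2\in f(x)$. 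The three-term splitting $(1-r)(z_p-z_0)+(1-r)(z_0-z_1)+r(z_2-z_1)$ is indeed controlled as you describe; the only detail to make explicit is to fix a balanced $V'\in\V(0)$ with $V'+V'+V'\subseteq V$ first, so that the coefficients $1-r<1$ and $r<1$ can be absorbed into $V'$, and to choose $\beta$ small enough that both $\beta(z_3-z_1)\in V'$ and $\tfrac{\beta\delta}{1+\beta\delta}(z_0-z_1)\in V'$ before invoking lower continuity at $x_0$. What your route buys is a proof of the lemma that does not outsource its hardest case to an external reference; what the paper's route buys is brevity.
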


\begin{proof}
For lower continuity the statement is proven in \cite[Theorem 2.6.1]{GRTZ}. We will now prove it for efficiency.

Let $f$ be efficient at $x_0$, i.e., there is a bounded set $B$ in $Z$ such that $f(u)\cap B \neq  \emptyset$ for all $u\in U$. If $x\in\Int\of{\dom f}$ then there is some $t>0$ such that
$y:=x+t(x-x_0)\in \dom f$. Let $z\in f(y)$ and
\[
W:=\cb{x}+\frac{t}{1+t}\of{U-\cb{x_0}}=\frac{t}{1+t}U+\cb{x-\frac{t}{1+t}x_0}=\frac{t}{1+t}U+\cb{\frac{1}{1+t}y}.
\] 
Then $W$ is a neighborhood of $x$. For every $w\in W$ there is some $u\in U$ with $w= \frac{t}{1+t}u+\frac{1}{1+t}y$. By convexity of $f$ we get
\[
f(w)\supseteq \frac{t}{1+t}f(u)+\frac{1}{1+t}f(y)\supseteq \frac{t}{1+t}f(u)+\cb{\frac{1}{1+t}z}.
\]
The set $\widetilde{B}:=\frac{t}{1+t}B+\cb{\frac{1}{1+t}z}$ is a bounded set and $\frac{t}{1+t}(f(u)\cap B) + \cb{\frac{1}{1+t}z} \subseteq f(w)\cap\widetilde{B}$ implies $f(w)\cap\widetilde{B}\neq \emptyset$. Hence $f$ is efficient at $x_0$. 

The case of lattice boundedness can be treated in the same way by replacing $B$ by a singleton $\cb{a}$ in the considerations above. By equivalence between lattice-boundedness from above and upper lattice-semicontinuity, the statement for upper lattice-semicontinuity is proven as well.
\end{proof}

In the Fenchel-Rockafellar type duality theorems for set-valued optimization mentioned in the introduction, mainly two methods of proof have been used. In \cite{Hamel11} Hamel directly applies a separation theorem in $X\times Z$ under the assumption that $\Int(\gr g)\neq \emptyset$, whereas Schrage \cite{DissSchrage}, \cite{Schrage11} and L\"ohne \cite{BookLoehne} assume that all scalarizations are continuous (in fact L\"ohne defines a topology on $\F(Z,C)$ in such a way that a function $f:X\to\F(Z,C)$ is continuous with respect to this topology if and only if all scalarizations are continuous) and apply the scalar Fenchel-Rockafellar theorem to the scalarizations. For this reason we next analyze the relationships between the semicontinuity concepts for a set-valued function and semicontinuity of its scalarizations.

\begin{proposition}
If $f:X\to\F(Z,C)$ is lower continuous at $x_0\in X$ then $\varphi_{(f,z^*)}$ is upper semicontinuous at $x_0$ for every $z^*\in Z^*$.
\end{proposition}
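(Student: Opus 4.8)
The plan is to unwind both definitions and observe that they match up almost verbatim once the ``lower $\leftrightarrow$ upper'' reversal noted in Remark \ref{rem:contvssemicont} is taken into account. Recall that $\varphi_{(f,z^*)}(x)=\inf_{z\in f(x)}-z^*(z)$ and that upper semicontinuity of this extended-real-valued function at $x_0$ means: for every real $\lambda>\varphi_{(f,z^*)}(x_0)$ there is a neighborhood $U$ of $x_0$ with $\varphi_{(f,z^*)}(x)<\lambda$ for all $x\in U$. I would fix such a $\lambda$. If $\varphi_{(f,z^*)}(x_0)=+\infty$, i.e.\ $x_0\notin\dom f$, there is no such $\lambda$ and the assertion is vacuous, so I may assume $\varphi_{(f,z^*)}(x_0)<+\infty$, which forces $f(x_0)\neq\emptyset$.

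Next I would produce a witness in the image set. Since $\lambda>\varphi_{(f,z^*)}(x_0)=\inf_{z\in f(x_0)}-z^*(z)$, there exists some $z_0\in f(x_0)$ with $-z^*(z_0)<\lambda$. This $z_0$ is the point at which I intend to invoke lower continuity.

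The key step is to realize the scalar sublevel condition as an honest open neighborhood of $z_0$ in $Z$, which is exactly the kind of object that Definition \ref{def:cont}(ii) consumes. Because $z^*\in Z^*$ is continuous, the set
\[
V:=\cb{z\in Z \st -z^*(z)<\lambda}
\]
is open, and by the choice of $z_0$ it satisfies $z_0\in V$, so $V\in\V(z_0)$. Lower continuity of $f$ at $x_0$ then yields a neighborhood $U\in\NN(x_0)$ such that $f(x)\cap V\neq\emptyset$ for all $x\in U$.

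Finally I would read off the conclusion. For each $x\in U$ choose $z\in f(x)\cap V$; then $-z^*(z)<\lambda$, whence $\varphi_{(f,z^*)}(x)=\inf_{z'\in f(x)}-z^*(z')\le -z^*(z)<\lambda$. Thus $\varphi_{(f,z^*)}(x)<\lambda$ for all $x\in U$, which is precisely upper semicontinuity of $\varphi_{(f,z^*)}$ at $x_0$. I do not expect a genuine obstacle here; the only points requiring care are the correct orientation of the inequalities (the ``lower'' in l.c.\ corresponds to ``upper'' in the scalar sense) and the trivial handling of the value $+\infty$. The one substantive maneuver is using continuity of $z^*$ to convert the strict scalar inequality $-z^*(z_0)<\lambda$ into an open neighborhood $V$ of $z_0$ on which lower continuity can be applied.
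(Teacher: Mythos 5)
Your proof is correct and takes essentially the same route as the paper's: produce a witness $z_0\in f(x_0)$ whose value $-z^*(z_0)$ beats the threshold, use continuity of $z^*$ to turn the strict sublevel condition into an open neighborhood $V$ of that witness, and feed $V$ into lower continuity to obtain the neighborhood $U$ on which $\varphi_{(f,z^*)}<\lambda$. The only difference is cosmetic: the paper distinguishes the three cases $\varphi_{(f,z^*)}(x_0)=+\infty$, finite, and $-\infty$ (with thresholds $\varphi_{(f,z^*)}(x_0)+\varepsilon$ and $-n$ respectively), whereas your $\lambda$-formulation of upper semicontinuity handles the latter two uniformly.
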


\begin{proof}
Let $z^*\in Z^*$. We distinguish 3 cases.

1. $\varphi_{(f,z^*)}(x_0)=+\infty$. Then $\varphi_{(f,z^*)}$ is obviously upper semicontinuous at $x_0$.

2. $\varphi_{(f,z^*)}(x_0)\in \R$.
Let $\varepsilon >0$. Then there exists some $z_\varepsilon\in f(x_0)$ such that 
\[
-z^*(z_\varepsilon) < \inf_{z\in f(x_0)}(-z^*(z)) + \varepsilon = \varphi_{(f,z^*)}(x_0)+\varepsilon.
\]
$V:=\cb{z\in Z \st -z^*(z) < \varphi_{(f,z^*)}(x_0)+\varepsilon}$ is a neighborhood of $z_\varepsilon$ in $Z$. From the lower continuity of $f$ at $x_0$ we obtain the existence of some neighborhood $U$ of $x_0$ in $X$ with $f(x)\cap V\neq \emptyset$ for every $x\in U$. Hence $\varphi_{(f,z^*)}(x)< \varphi_{(f,z^*)}(x_0)+\varepsilon$ for every $x\in U$, implying upper semicontinuity of $\varphi_{(f,z^*)}$ at $x_0$.

3. $\varphi_{(f,z^*)}(x_0)=-\infty$.
Let $n\in \N$. Then there exists some $z_n\in f(x_0)$ such that $-z^*(z_n) < -n$.
$V:=\cb{z\in Z \st -z^*(z) < -n}$ is a neighborhood of $z_n$ in $Z$. From the lower continuity of $f$ at $x_0$ we obtain the existence of some neighborhood $U$ of $x_0$ in $X$ with $f(x)\cap V\neq \emptyset$ for every $x\in U$. Hence $\varphi_{(f,z^*)}(x)< -n$ for every $x\in U$, implying upper semicontinuity of $\varphi_{(f,z^*)}$ at $x_0$.
\end{proof}
The following example shows that the converse is not true in general.

\begin{example}
Let $X=\R$, $Z=\R^2$, $C=\R^2_+$ and 
\[
f(x)=
  \begin{cases}
  \cb{z\in\R^2\st z_1+xz_2\ge 1+x} & \text{ if } x>0 \\
  \R^2_+ & \text{ if } x\le 0.
  \end{cases}
\]
If $z_0 =(0,0)^T\in f(0)$ and $V=\cb{z\in\R^2 \st \abs{z_1}+\abs{z_2}< 1}\in \V(z_0)$ then
\[
\abs{z_1}+\abs{z_2}\ge z_1+xz_2\ge 1+x > 1
\]
for all $x\in (0,1)$ and $z\in f(x)$, i.e., $f(x)\cap V =\emptyset$. Hence $f$ is not lower continuous at $x_0=0$.

For the scalarizations we have
\[
  \varphi_{(f,z^*)}(x)=
  \begin{cases}
		-(z_1^*+z_2^*) & \text{ if } x>0 \text{ and } xz_1^*=z_2^* \\
		-\infty            & \text{ if } x>0 \text{ and } xz_1^*\neq z_2^* \\
		0                  & \text{ if } x\le 0
  \end{cases}
\]
if $z^*\in-\R^2_+$ and $\varphi_{(f,z^*)}\equiv -\infty$ if $z^*\not\in-\R^2_+$.
Hence the scalarizations $\varphi_{(f,z^*)}$ are upper semicontinuous at $x_0=0$ for all $z^*\in \R^2$.
\end{example}

\begin{proposition}
If $f:X\to\F(Z,C)$ is Hausdorff upper continuous at $x_0\in X$ then $\varphi_{(f,z^*)}$ is lower semicontinuous at $x_0$ for every $z^*\in Z^*$.
\end{proposition}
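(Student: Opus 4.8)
The plan is to establish this as the exact mirror of the preceding proposition, exploiting that the scalarization $\varphi_{(f,z^*)}(x)=-\sup_{z\in f(x)}z^*(z)$ is the negative of a support functional. Hausdorff upper continuity confines $f(x)$ inside $f(x_0)+V$ for all $x$ near $x_0$, which controls this support functional \emph{from above}; after the sign change this becomes control of $\varphi_{(f,z^*)}$ \emph{from below}, i.e. lower semicontinuity. I would fix $z^*\in Z^*$ and verify the standard criterion: for every real $\lambda<\varphi_{(f,z^*)}(x_0)$ there is some $U\in\NN(x_0)$ with $\varphi_{(f,z^*)}(x)>\lambda$ for all $x\in U$.

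For the principal case assume $x_0\in\dom f$ with $s_0:=\sup_{z\in f(x_0)}z^*(z)$ finite, so that $\varphi_{(f,z^*)}(x_0)=-s_0\in\R$. Given $\lambda<-s_0$, choose $\varepsilon>0$ so small that $-s_0-\varepsilon>\lambda$. Since $z^*$ is continuous, there is a neighborhood $V$ of $0$ in $Z$ with $z^*(v)<\varepsilon$ for all $v\in V$; by Hausdorff upper continuity there is some $U\in\NN(x_0)$ with $f(x)\subseteq f(x_0)+V$ for all $x\in U$. Writing any $z\in f(x)$ as $z=z_0+v$ with $z_0\in f(x_0)$ and $v\in V$ gives $z^*(z)=z^*(z_0)+z^*(v)\le s_0+\varepsilon$, hence $\sup_{z\in f(x)}z^*(z)\le s_0+\varepsilon$ and therefore $\varphi_{(f,z^*)}(x)\ge -s_0-\varepsilon>\lambda$ for every $x\in U$, as required.

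Finally I would dispose of the two degenerate cases. If $\varphi_{(f,z^*)}(x_0)=-\infty$ (equivalently $\sup_{z\in f(x_0)}z^*(z)=+\infty$), then no real $\lambda$ lies below it and lower semicontinuity is vacuous. If $x_0\notin\dom f$, so $f(x_0)=\emptyset$ and $\varphi_{(f,z^*)}(x_0)=+\infty$, then Hausdorff upper continuity applied to any $V$ forces $f(x)\subseteq f(x_0)+V=\emptyset$ on a neighborhood of $x_0$, so $\varphi_{(f,z^*)}\equiv+\infty$ there and lower semicontinuity again holds (this simply reflects that $f$ can be Hausdorff upper continuous at a point outside its domain only if $x_0\in\Int(X\setminus\dom f)$). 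I expect no serious obstacle; the one point demanding care is the orientation of the inequalities, namely that \emph{upper} continuity of the set-valued map yields \emph{lower} semicontinuity of the scalarization precisely because of the minus sign in $\varphi_{(f,z^*)}$, in line with Remark \ref{rem:contvssemicont}.
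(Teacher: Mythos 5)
Your proof is correct and follows essentially the same route as the paper: both use Hausdorff upper continuity with the neighborhood $V=\cb{z\in Z \st z^*(z)<\varepsilon}$ to bound the support functional on $f(x)\subseteq f(x_0)+V$ and conclude $\varphi_{(f,z^*)}(x)\ge \varphi_{(f,z^*)}(x_0)-\varepsilon$ near $x_0$. The only difference is cosmetic: the paper handles all cases in one line using extended-real arithmetic (the inequality is vacuous when $\varphi_{(f,z^*)}(x_0)=-\infty$ and automatic when $f(x_0)=\emptyset$, since then $f(x_0)+V=\emptyset$), whereas you separate the degenerate cases explicitly.
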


\begin{proof}
Let $z^*\in Z^*$ and $\varepsilon >0$. Then $V:=\cb{z\in Z \st -z^*(z) > - \varepsilon}$ is a neighborhood of $0$ in $Z$. From the Hausdorff upper continuity of $f$ at $x_0$ we obtain the existence of some neighborhood $U$ of $x_0$ in $X$ with $f(x)\subseteq f(x_0)+V$ for every $x\in U$. Hence $\varphi_{(f,z^*)}(x)\ge \varphi_{(f,z^*)}(x_0)-\varepsilon$ for every $x\in U$, implying lower semicontinuity of $\varphi_{(f,z^*)}$ at $x_0$.
\end{proof}

It is easy to show that in Example \ref{ex:setA} all scalarizations of $f$ are continuous at $x_0=1$. Hence the converse of the preceding proposition is not true in general.

\begin{proposition}
Let $Z$ be a locally convex space and $f:X\to \F(Z,C)$. If $f(x_0)$ is a convex set and $\varphi_{(f,z^*)}$ is lower semicontinuous at $x_0$ for every $z^*\in C^-\setminus\cb{0}$ then $f$ is lower lattice-semicontinuous at $x_0$.
\end{proposition}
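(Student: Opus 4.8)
The plan is to verify the characterization \eqref{CharLSC} of lower lattice-semicontinuity directly. So I fix a point $z_0\notin f(x_0)$ and aim to produce $U\in\NN(x_0)$ and $V\in\V(z_0)$ such that $z\notin f(x)$ for every $x\in U$ and every $z\in V$. The whole idea is to exhibit a single direction $z^*\in C^-\setminus\cb{0}$ for which the scalar strict inequality $\varphi_{(f,z^*)}(x_0)>-z^*(z_0)$ holds, and then to transport this inequality to nearby points by invoking lower semicontinuity of the real-valued scalarization $\varphi_{(f,z^*)}$.

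First I would secure such a $z^*$. If $f(x_0)=\emptyset$, then $\varphi_{(f,z^*)}(x_0)=+\infty$ for every $z^*$, so any $z^*\in C^-\setminus\cb{0}$ (which exists because we assume $C^-\neq\cb{0}$) does the job. If $f(x_0)\neq\emptyset$, then $f(x_0)$ is a nonempty closed (it lies in $\F(Z,C)$) convex (by hypothesis) set not containing $z_0$, so a separation argument in the locally convex space $Z$ yields, after possibly replacing $z^*$ by $-z^*$, some $z^*\in Z^*\setminus\cb{0}$ and some $\gamma\in\R$ with $z^*(z)\le\gamma<z^*(z_0)$ for all $z\in f(x_0)$. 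The key observation is that this separating functional automatically lies in $C^-$: since $f(x_0)=\cl(f(x_0)+C)\supseteq f(x_0)+C$, fixing any $\bar z\in f(x_0)$ we have $z^*(\bar z+tc)=z^*(\bar z)+tz^*(c)\le\gamma$ for every $c\in C$ and every $t\ge0$, which forces $z^*(c)\le0$, i.e.\ $z^*\in C^-\setminus\cb{0}$. In either case $\varphi_{(f,z^*)}(x_0)=-\sup_{z\in f(x_0)}z^*(z)\ge-\gamma>-z^*(z_0)$.

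Next I would exploit the lower semicontinuity of $\varphi_{(f,z^*)}$ at $x_0$. Since $-z^*(z_0)<\varphi_{(f,z^*)}(x_0)$, I can pick a real number $\beta$ with $-z^*(z_0)<\beta<\varphi_{(f,z^*)}(x_0)$; lower semicontinuity then provides a neighborhood $U\in\NN(x_0)$ on which $\varphi_{(f,z^*)}(x)>\beta$. On the image side I set $V:=\cb{z\in Z\st -z^*(z)<\beta}$, which is open by continuity of $z^*$ and contains $z_0$ because $-z^*(z_0)<\beta$. Finally I would check \eqref{CharLSC}: if some $z\in V$ were to lie in $f(x)$ for some $x\in U$, then $\varphi_{(f,z^*)}(x)=\inf_{w\in f(x)}(-z^*(w))\le-z^*(z)<\beta$, contradicting $\varphi_{(f,z^*)}(x)>\beta$; hence $z\notin f(x)$ for all $x\in U$ and all $z\in V$, which is exactly \eqref{CharLSC}.

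I expect the main obstacle to be the selection of $z^*$ in the nonempty case, specifically the verification that the functional separating $z_0$ from $f(x_0)$ can be chosen in $C^-\setminus\cb{0}$ rather than merely in $Z^*\setminus\cb{0}$. This is precisely where the upper-closedness $f(x_0)=\cl(f(x_0)+C)$ enters, and it is what makes the hypothesis of lower semicontinuity---quantified only over directions $z^*\in C^-\setminus\cb{0}$---applicable. The remaining steps amount to the routine translation between the set-valued non-membership condition and the scalar strict inequality, together with a standard lower-semicontinuity argument.
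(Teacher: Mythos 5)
Your proof is correct and follows essentially the same route as the paper: strict separation of $z_0$ from the closed convex set $f(x_0)$ by a functional which, thanks to $f(x_0)=\cl(f(x_0)+C)$, necessarily lies in $C^-\setminus\cb{0}$, followed by the scalar lower-semicontinuity argument on the open half-space $V$. The only differences are cosmetic: you verify \eqref{CharLSC} directly rather than arguing by contradiction, and you treat the case $f(x_0)=\emptyset$ explicitly (where the paper's separation step is vacuous but its conclusion still holds trivially).
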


\begin{proof}
Assume that $f$ is not lower lattice-semicontinuous at $x_0$. Then
\[
\exists z_0\not\in f(x_0) \; \forall U\in\NN(x_0)\; \forall V\in\V(z_0) \; \exists x\in U : V\cap f(x)\neq\emptyset.
\]
Since $f(x_0)$ is closed and convex, we can separate $z_0$ and $f(x_0)$ strictly. Hence there exist $z^*\in C^-\setminus\cb{0}$ and $\alpha \in \R$ with
\begin{equation}\label{notlsc}
-z^*(z_0)< \alpha <\inf_{z\in f(x_0)} (-z^*(z))=\varphi_{(f,z^*)}(x_0).
\end{equation}
Let $V:=\cb{z\in Z \st -z^*(z)<\alpha}\in \V(z_0)$. Then for every $U\in \NN(x_0)$ there exists some $x\in U$ with $V\cap f(x)\neq\emptyset$ due to \eqref{notlsc}. Hence 
\[\inf_{x\in U}\varphi_{(f,z^*)}(x)< \alpha < \varphi_{(f,z^*)}(x_0)
\] 
for every $U\in \NN(x_0)$, contradicting the lower semicontinuity of $\varphi_{(f,z^*)}$ at $x_0$.
\end{proof}

Again, Example \ref{ex:setA} can be taken as a counterexample for the converse statement. The function $f$ in Example \ref{ex:setA} is lower lattice-semicontinuous at $x_0=0$, but for $-z^*=(1,0)^T$ the scalarization is not lower semicontinuous at $0$.

Semicontinuity of all scalarizations $\varphi_{(f,z^*)}$ with $z^*\in C^-\setminus\cb{0}$, denoted by $C^-$-l.s.c. and $C^-$-u.s.c., respectively, can be incorporated into our diagram as follows.

\[
\begin{xy} \xymatrix{
\Int (\gr f)\neq \emptyset\; \ar@<-5pt>@2{->}[r] & 
\;\text{ l-b.a }\; \ar@<-5pt>@2{->}[l]_/-10pt/{\Int C\neq \emptyset} \ar@<-5pt>@2{->}[dd]_{f\text{ convex}} \ar@<-5pt>@2{->}[r] &
\;\text{ eff.
 }\; \ar@<-5pt>@2{->}[l]_/2pt/{\Int C\neq \emptyset} \ar@<5pt>@2{<-}[dd]^{\text{(BN)}} &
&
\\
\\
& 
\; \text{ u.s.c. }\; \ar@<-5pt>@2{->}[r] \ar@2{->}@<-5pt>[uu] &
\;\text{ l.c. }\; \ar@<-5pt>@2{->}[l]_/-3pt/{\Int C\neq \emptyset} \ar@<5pt>@2{<-}[r] \ar@<-5pt>@2{->}[d] \ar@2{<-}@<5pt>[uu]^{f\text{ convex}} &
\;\text{ H-l.c. }\; \ar@2{->}[dd]^{\begin{subarray}{l}f\text{ convex}\\ f(x_0)\subseteq D+C\end{subarray}} \ar@<5pt>@2{<-}[l]^/3pt/{\substack{f\text{ convex}\\ f(x_0)\subseteq D+C}} \\
& &
\; C^--\text{u.s.c. }\; \ar@<-5pt>@2{->}[d]_{f\text{ convex}} &
\\
& 
\; \text{ l.s.c. }\; \ar@2{<-}[r]_/-5pt/{\substack{Z\text{ loc. conv.}\\ f(x_0)\text{ conv.}}} &
\; C^--\text{l.s.c. }\; \ar@2{<-}[r] &
\;\text{ H-u.c. }\; &
\;\text{ u.c. } \ar@2{->}[l] 
}\end{xy}
\]

We have seen by counterexamples that in general none of the lower continuity (or upper semicontinuity) concepts will be implied by $C^--$ upper semicontinuity. In order to guarantee lower continuity, one needs some kind of uniform upper semicontinuity of the scalarizations.

\begin{theorem}
Let $Z$ be a locally convex space and $f:X\to\F(Z,C)$ be convex-valued. Assume that there is a set $B\subseteq  Z^*$ with $\cone B = C^-$ and 
\begin{equation}\label{infsup}
\forall z\in Z : \sup_{z^*\in B}z^*(z)<\infty \quad \text{ and }\quad     \forall V\in \V(0) : \inf_{z^*\in B}\sup_{z\in V}\sqb{-z^*(z)}>0
\end{equation}
such that the scalarizations $\varphi_{(f,z^*)}$ are upper semicontinuous at $x_0$ uniformly with respect to $B$, i.e., 
\begin{multline}\label{unifusc}
\forall \varepsilon >0 \;\exists U\in \NN(x_0) \; \forall x\in U\; \forall z^*\in B : \\
\begin{cases}
  \varphi_{(f,z^*)}(x)<-\frac{1}{\varepsilon} & \text{ if }\varphi_{(f,z^*)}(x_0)=-\infty \\
  \varphi_{(f,z^*)}(x)<\varphi_{(f,z^*)}(x_0)+\varepsilon & \text{ otherwise.}
\end{cases}
\end{multline}
Then $f$ is lower continuous at $x_0$.
\end{theorem}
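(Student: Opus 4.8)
The plan is to argue by contradiction, turning the failure of lower continuity into a separation statement in $Z$ and then reading that statement off against the scalarizations through condition \eqref{unifusc}. Suppose $f$ is not lower continuous at $x_0$. Since $f$ is lower continuous by force whenever $x_0\notin\dom f$, there are $z_0\in f(x_0)$ and $V\in\V(z_0)$ such that for every $U\in\NN(x_0)$ some $x\in U$ satisfies $f(x)\cap V=\emptyset$. As $Z$ is locally convex and shrinking $V$ only preserves this failure, I may assume $V=z_0+V_0$ with $V_0$ an open, convex, balanced neighbourhood of $0$. Two quantities attached to this fixed data drive the argument: $\delta:=\inf_{z^*\in B}\sup_{v\in V_0}(-z^*(v))$, which is strictly positive by the second half of \eqref{infsup}, and $M:=\sup_{z^*\in B}z^*(z_0)$, which is finite by the first half of \eqref{infsup} applied to $z=z_0$.

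Next I would fix $\varepsilon\in(0,\delta)$ small enough that also $-1/\varepsilon<\delta-M$, and take $U\in\NN(x_0)$ as provided by \eqref{unifusc} for this $\varepsilon$. Picking $x\in U$ with $f(x)\cap(z_0+V_0)=\emptyset$, I first observe that $f(x)\neq\emptyset$: otherwise $\varphi_{(f,z^*)}(x)=+\infty$ for every $z^*\in B$, which immediately violates \eqref{unifusc} (note $\varphi_{(f,z^*)}(x_0)\le-z^*(z_0)<\infty$, so the relevant branch forces a value below $\varphi_{(f,z^*)}(x_0)+\varepsilon$ or below $-1/\varepsilon$, in any case finite). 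Thus $f(x)$ is a nonempty closed convex set disjoint from the open convex set $z_0+V_0$, and a standard separation theorem in $Z$ gives $w^*\in Z^*\setminus\cb{0}$ with $\sup_{v\in V_0}w^*(z_0+v)\le\inf_{y\in f(x)}w^*(y)$. Putting $z^*:=-w^*$, boundedness from below of $w^*$ on the upper closed set $f(x)$ forces $w^*(c)\ge0$ for all $c\in C$, so $z^*\in C^-\setminus\cb{0}$, and the separation rewrites as $-z^*(z_0)+\sup_{v\in V_0}(-z^*(v))\le\varphi_{(f,z^*)}(x)$.

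Finally I would normalize. Since $\cone B=C^-$, I may write $z^*=t\,b$ with $t>0$ and $b\in B$, and divide the last inequality by $t$, using positive homogeneity of $\varphi_{(f,\cdot)}$ and of the support term, to obtain $-b(z_0)+s\le\varphi_{(f,b)}(x)$ with $s:=\sup_{v\in V_0}(-b(v))\ge\delta$. Together with $\varphi_{(f,b)}(x_0)\le-b(z_0)$ (a consequence of $z_0\in f(x_0)$), the two branches of \eqref{unifusc} both fail: if $\varphi_{(f,b)}(x_0)\in\R$ then $\varphi_{(f,b)}(x)\ge\varphi_{(f,b)}(x_0)+\delta>\varphi_{(f,b)}(x_0)+\varepsilon$, while if $\varphi_{(f,b)}(x_0)=-\infty$ then $\varphi_{(f,b)}(x)\ge-b(z_0)+\delta\ge-M+\delta>-1/\varepsilon$. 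This contradiction shows $f$ is lower continuous at $x_0$.

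I expect the delicate point to be precisely the coupling of the two conditions in \eqref{infsup} with the two branches of \eqref{unifusc}. The second condition supplies a \emph{uniform} positive gap $\delta$ between the separating support value $\sup_{v\in V_0}(-z^*(v))$ and $-z^*(z_0)$ across all of $B$, so that a single $\varepsilon$ beats the finite branch; the first condition keeps $-b(z_0)$ uniformly bounded below by $-M$, so that the \emph{same} $\varepsilon$ simultaneously defeats the $-\infty$ branch for every admissible $b$. The normalization using $\cone B=C^-$, which reduces an arbitrary separating functional to a positive multiple of an element of $B$, is what makes the uniformity in \eqref{unifusc} applicable in the first place, and getting all these uniformities to rest on one neighbourhood $U$ is the real content of the proof.
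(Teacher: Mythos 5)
Your proof is correct and takes essentially the same route as the paper's: argue by contradiction, separate the (nonempty, convex, upper closed) value $f(x)$ from a convex neighborhood of $z_0$, reduce the separating functional to an element of $B$ via $\cone B = C^-$ and positive homogeneity, and then defeat both branches of the uniform upper semicontinuity condition with an $\varepsilon$ supplied by the two parts of assumption \eqref{infsup}. You merely make explicit two steps the paper compresses into ``weakly separating, we get some $z^*_U\in B$'', namely the nonemptiness of $f(x)$ on the relevant neighborhood and the normalization of the separating functional into $B$.
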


\begin{proof}
Assume that $f$ is not lower continuous at $x_0$. Then there exist $z_0\in f(x_0)$ and $V\in \V(0)$ such that for all $U\in \NN(x_0)$ there is some $x_U\in U$ with $f(x_U)\cap (z_0+V) = \emptyset$. Since $Z$ is locally convex, $V$ can be assumed to be convex. Weakly separating $f(x_U)$ and $\cb{z_0}+V$, we get
\[
\forall U\in\NN(x_0)\;\exists x_U\in U\; \exists z^*_U\in B : -z^*_U(z_0)+\sup_{z\in V}\sqb{-z_U^*(z)}\le \varphi_{(f,z^*_U)}(x_U).
\]
Choose $\varepsilon >0$ such that
\[
\varepsilon <\inf_{z^*\in B}\sup_{z\in V}\sqb{-z^*(z)}
\quad\text{ and }\quad \frac{1}{\varepsilon} >\sup_{z^*\in B}z^*(z_0).
\]
By assumption \eqref{infsup} such $\varepsilon$ always exists.
By \eqref{unifusc} there is a neighborhood $\bar U \in \NN(x_0)$ such that $\varphi_{(f,z^*)}(x)<\varphi_{(f,z^*)}(x_0)+\varepsilon$ for all $x\in \bar U$ and $z^*\in B$ with $\varphi_{(f,z^*)}(x_0)>-\infty$ and $\varphi_{(f,z^*)}(x)<-\frac{1}{\varepsilon}$ if $\varphi_{(f,z^*)}(x_0)=-\infty$. In particular, if $\varphi_{(f,z^*_{\bar U})}(x_0)>-\infty$ then
\begin{align*}
\varphi_{(f,z^*_{\bar U})}(x_{\bar U})<\varphi_{(f,z^*_{\bar U})}(x_0)+\varepsilon \le -z^*_{\bar U}(z_0)+\sup_{z\in V}\sqb{-z_{\bar U}^*(z)}\le \varphi_{(f,z^*_{\bar U})}(x_{\bar U}),
\end{align*}
a contradiction, and if $\varphi_{(f,z^*_{\bar U})}(x_0)=-\infty$, then
\begin{align*}
\varphi_{(f,z^*_{\bar U})}(x_{\bar U})<-\frac{1}{\varepsilon} \le -z^*_{\bar U}(z_0)+\sup_{z\in V}\sqb{-z_{\bar U}^*(z)}\le \varphi_{(f,z^*_{\bar U})}(x_{\bar U}),
\end{align*}
a contradiction, too.
\end{proof}
The assumption of the existence of a set $B$ in the preceding theorem is not very restrictive as the following remark shows.
\begin{remark}
(i)
If $B$ is weak$^*$-compact and $0\not\in B$ then \eqref{infsup} is satisfied. This follows directly from the fact that a continuous real-valued function has a finite supremum over a compact set and that a lower semicontinuous function attains its infimum over a compact set, taking into account that $\sup_{z\in V}\sqb{-z^*(z)}>0$ if $z^*\neq 0$.

(ii) If $Z$ is a Hausdorff locally convex space and $\Int C\neq\emptyset$, then $C^-$ has a weak$^*$-compact base $B$ (see e.g.
\cite[Lemma 2.2.17]{GRTZ}). Since a base of the cone $C^-$ is defined to be a convex set $B$ with $0\not\in \cl B$ and $\cone B =C^-$, there is a set $B\subseteq Z^*$ satisfying $\cone B = C^-$ and \eqref{infsup}.

(iii) If $Z$ is a normed space then $B:=\cb{z^*\in C^- \st \norm{z^*}_*=1}$ satisfies \eqref{infsup} and $\cone B=C^-$.
Indeed,
\[
\sup_{z^*\in B}z^*(z)\le \sup_{z^*\in B}\norm{z^*}_*\norm{z}\le\norm{z}<\infty
\]
and for every $V\in\V(0)$ there is some $\delta >0$ with $V\supseteq\cb{z\in Z \st \norm{z}\le\delta}$, hence for $z^*\in B$ we obtain
\[
\sup_{z\in V} \sqb{-z^*(z)}\ge\sup_{\norm{z}\le\delta}z^*(-z)=\sup_{\norm{z'}\le 1}z^*(\delta z')=\delta \norm{z^*}_*=\delta.
\]

\end{remark}

Analogously, it can be shown that uniform lower semicontinuity of the scalarizations implies Hausdorff upper continuity.

\begin{theorem}
Let $Z$ be a locally convex space and $f:X\to\F(Z,C)$ be convex-valued. Assume that there is a set $B\subseteq  Z^*$ with $\cone B = C^-$ and \eqref{infsup} 
such that the scalarizations $\varphi_{(f,z^*)}$ are lower semicontinuous at $x_0$ uniformly with respect to $B$, i.e., 
\begin{equation}\label{uniflsc}
\forall \varepsilon >0 \;\exists U\in \NN(x_0) \; \forall x\in U \; \forall z^*\in B : 
  \varphi_{(f,z^*)}(x)>\varphi_{(f,z^*)}(x_0)-\varepsilon .
\end{equation}
Then $f$ is Hausdorff upper continuous at $x_0$.
\end{theorem}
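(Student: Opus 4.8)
The plan is to argue by contradiction, mirroring the proof of the preceding theorem with the roles of upper and lower semicontinuity interchanged. Suppose $f$ is not Hausdorff upper continuous at $x_0$. By local convexity of $Z$ we may fix an open, convex, balanced neighbourhood $V\in\V(0)$ such that for every $U\in\NN(x_0)$ there is a point $x_U\in U$ with $f(x_U)\not\subseteq f(x_0)+V$, and we pick $z_U\in f(x_U)\setminus\of{f(x_0)+V}$. Observe first that \eqref{uniflsc} forces $x_0\in\dom f$: were $f(x_0)=\emptyset$, then $\varphi_{(f,z^*)}(x_0)=+\infty$ for every $z^*$ and \eqref{uniflsc} could never be satisfied. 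Hence $O:=f(x_0)+V$ is a nonempty open convex set and $z_U\notin O$.

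The key step is to separate the point $z_U$ from the open convex set $O$ by the geometric Hahn--Banach theorem, obtaining $z^*_U\neq 0$ with $z^*_U(z_U)\le\inf_{w\in O}z^*_U(w)$, and then to recognise that $w^*_U:=-z^*_U$ necessarily belongs to $C^-\setminus\cb{0}$. This is where upper-closedness of $f(x_0)$ enters: from $f(x_0)=\cl\of{f(x_0)+C}$ we get $f(x_0)+C\subseteq f(x_0)$, so for $a\in f(x_0)$, $c\in C$ and $t\ge 0$ the point $a+tc$ still lies in $f(x_0)$ and hence $a+tc+v\in O$ for $v\in V$; if $z^*_U(c)<0$ for some $c\in C$, letting $t\to\infty$ would force $\inf_{w\in O}z^*_U(w)=-\infty$, contradicting the separation. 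Thus $z^*_U(c)\ge 0$ on $C$, i.e. $w^*_U\in C^-$. Since \eqref{infsup} entails $0\notin B$ and $\cone B=C^-$, we may write $w^*_U=\lambda_U b_U$ with $\lambda_U>0$ and $b_U\in B$.

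Next I would convert the separation into a scalar inequality in direction $w^*_U$. Using that $V$ is balanced, that $z_U\in f(x_U)$ gives $-w^*_U(z_U)\ge\varphi_{(f,w^*_U)}(x_U)$, and that $\inf_{a\in f(x_0)}\sqb{-w^*_U(a)}=\varphi_{(f,w^*_U)}(x_0)$, the separation inequality rearranges into
\[
\varphi_{(f,w^*_U)}(x_U)+\sup_{v\in V}\sqb{-w^*_U(v)}\le\varphi_{(f,w^*_U)}(x_0).
\]
Dividing by $\lambda_U>0$ and using the uniform bound $\sup_{v\in V}\sqb{-b_U(v)}\ge\inf_{z^*\in B}\sup_{v\in V}\sqb{-z^*(v)}=:\delta>0$ granted by \eqref{infsup}, this turns into
\[
\varphi_{(f,b_U)}(x_U)\le\varphi_{(f,b_U)}(x_0)-\delta.
\]

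Finally I would apply the uniform lower semicontinuity \eqref{uniflsc} with $\eps=\delta$, yielding $U\in\NN(x_0)$ with $\varphi_{(f,z^*)}(x)>\varphi_{(f,z^*)}(x_0)-\delta$ for all $x\in U$ and $z^*\in B$. Taking the bad point $x_U\in U$ and the functional $b_U$ constructed above gives $\varphi_{(f,b_U)}(x_U)>\varphi_{(f,b_U)}(x_0)-\delta$, directly contradicting the previous inequality when $\varphi_{(f,b_U)}(x_0)$ is finite; in the remaining case $\varphi_{(f,b_U)}(x_0)=-\infty$ the two estimates read $\varphi_{(f,b_U)}(x_U)=-\infty$ and $\varphi_{(f,b_U)}(x_U)>-\infty$, again impossible. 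I expect the main obstacle to be the middle step: ensuring that the separating functional lands in $C^-$ so that the hypothesis, stated only for directions in $B\subseteq C^-$, applies, and extracting a positive gap $\delta$ uniform in $x_U$. Both are supplied precisely by the upper-closedness of $f(x_0)$ and by condition \eqref{infsup}.
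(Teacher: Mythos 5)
Your proof is correct and follows essentially the same route as the paper's: assume Hausdorff upper continuity fails, separate the excluded point $z_U$ from the open convex set $f(x_0)+V$, rescale the separating functional into $B$, and contradict \eqref{uniflsc} with $\varepsilon=\inf_{z^*\in B}\sup_{z\in V}\sqb{-z^*(z)}>0$ granted by \eqref{infsup}. You even supply details the paper's proof glosses over, namely that \eqref{uniflsc} forces $x_0\in\dom f$, and that the upper-closedness of $f(x_0)$ forces the separating functional into $C^-\setminus\cb{0}$ so that it can indeed be taken (after positive scaling) in $B$.
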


\begin{proof}
Assume that $f$ is not  Hausdorff upper continuous at $x_0$. Then there exists some neighborhood $V\in \V(0)$ such that for all $U\in \NN(x_0)$ there is some $x_U\in U$ with $f(x_U)\not\subseteq f(x_0)-V$. Hence there is some $z_U\in f(x_U)$ with $z_U\not \in f(x_0)-V$. Since $Z$ is locally convex, $V$ can be assumed to be convex. Weakly separating $z_U$ and $f(x_0)-V$, we get
\begin{multline*}
\forall U\in\NN(x_0)\;\exists x_U\in U\; \exists z_U\in f(x_U)\; \exists z^*_U\in B :\\[1em]
 -z^*_U(z_U)\le \varphi_{(f,z^*_U)}(x_0)+\inf_{z\in -V}\sqb{-z_U^*(z)}.
\end{multline*}
Choose 
\[
\varepsilon =-\sup_{z^*\in B}\inf_{z\in -V}\sqb{-z^*(z)}=\inf_{z^*\in B}\sup_{z\in V}\sqb{-z^*(z)}.
\]
By assumption \eqref{infsup} we have $\varepsilon > 0$.
By \eqref{uniflsc} there is a neighborhood $\bar U \in \NN(x_0)$ such that $\varphi_{(f,z^*)}(x)>\varphi_{(f,z^*)}(x_0)-\varepsilon$ for all $x\in \bar U$ and $z^*\in B$. In particular,
\begin{align*}
\varphi_{(f,z^*_{\bar U})}(x_0)-\varepsilon &< \varphi_{(f,z^*_{\bar U})}(x_{\bar U}) \le -z^*_{\bar U}(z_{\bar U})\\
&\le \varphi_{(f,z^*_{\bar U})}(x_0)+\inf_{z\in -V}\sqb{-z_{\bar U}^*(z)}\le \varphi_{(f,z^*_{\bar U})}(x_0)-\varepsilon,
\end{align*}
a contradiction.
\end{proof}

\section{A fundamental duality formula for set-valued functions}\label{sec:FDF}

In this section we will prove a set-valued analogon of the following fundamental duality theorem. Throughout this section, $X$, $Y$ and $Z$ are locally convex Hausdorff spaces with topological duals $X^*$, $Y^*$ and $Z^*$, respectively and $C\subseteq Z$ is a closed convex cone with $0\in C\neq Z$. 

\begin{theorem}[\cite{Zalinescu}, Theorem 2.7.1]\label{ThmScalarFD}
Let $\Phi:X\times Y\to  \OLR$ be a proper convex function and $h:Y\to \OLR$, $h(y):=\inf_{x\in X}\Phi(x,y)$ the associated marginal function, and assume that there is some $x_0 \in X$ such that $(x_0,0)\in \dom \Phi$ and $\Phi(x_0,\cdot)$ is continuous at $0$.

Then 
\begin{equation}\label{eq:max}
h(0)=\inf_{x\in X} \Phi(x,0)=\max_{y^*\in Y^*}-\Phi^*(0,y^*).
\end{equation}
\end{theorem}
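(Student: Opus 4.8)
The plan is to reduce the identity \eqref{eq:max} to a biconjugation statement for the marginal function $h$ and to read off the attainment of the maximum from the nonemptiness of a subdifferential. First I would record two elementary facts. Since $\Phi$ is convex, its inf-projection $h(y)=\inf_{x\in X}\Phi(x,y)$ is convex as well. Moreover, spelling out the conjugate gives
\[
\Phi^*(0,y^*)=\sup_{(x,y)}\of{y^*(y)-\Phi(x,y)}=\sup_{y}\of{y^*(y)-\inf_{x}\Phi(x,y)}=h^*(y^*),
\]
so that $-\Phi^*(0,y^*)=-h^*(y^*)$ for every $y^*\in Y^*$. The Fenchel--Young inequality $\Phi(x,0)+\Phi^*(0,y^*)\ge y^*(0)=0$ then yields $-\Phi^*(0,y^*)\le \Phi(x,0)$ for all $x$ and all $y^*$; passing to the infimum over $x$ and the supremum over $y^*$ gives the weak duality inequality
\[
\sup_{y^*\in Y^*}\of{-\Phi^*(0,y^*)}\le \inf_{x\in X}\Phi(x,0)=h(0).
\]
Thus it only remains to exhibit a single $y^*$ at which equality is reached.

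Next I would exploit the continuity hypothesis to control $h$ near $0$. Since $\Phi(x_0,\cdot)$ is continuous at $0$, it is bounded above by some $M$ on a neighborhood $V$ of $0$ in $Y$, and hence $h(y)\le \Phi(x_0,y)\le M$ for all $y\in V$, so $h$ is bounded above on a neighborhood of $0$; in particular $V\times(M,\infty)\subseteq\epi h$, so $\Int(\epi h)\neq\emptyset$. Also $h(0)\le\Phi(x_0,0)<\infty$ because $(x_0,0)\in\dom\Phi$. If $h(0)=-\infty$, weak duality already forces $-\Phi^*(0,y^*)=-\infty$ for every $y^*$ and the asserted maximum is attained trivially, so I may assume $h(0)\in\R$. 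A convex function that is finite at a point and bounded above on a neighborhood of it is continuous there, so $h$ is continuous at $0$ and $0\in\Int(\dom h)$.

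Finally I would obtain the attainment. Since $h$ is convex, finite, and continuous at the interior point $0$ of its domain, a separation argument applied to $\epi h$ and the boundary point $(0,h(0))$ — legitimate because $\Int(\epi h)\neq\emptyset$ — produces a continuous affine minorant of $h$ touching it at $0$, i.e. shows $\partial h(0)\neq\emptyset$. For any $y^*\in\partial h(0)$ the Fenchel--Young inequality holds as an equality, $h(0)+h^*(y^*)=y^*(0)=0$, that is $-\Phi^*(0,y^*)=-h^*(y^*)=h(0)$. Combined with weak duality, this shows that such $y^*$ attains the supremum and that the common value is $h(0)$, establishing \eqref{eq:max}. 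The real work, and the genuine obstacle, is concentrated in the last two steps: upgrading local boundedness from above to continuity of $h$, and then the nonemptiness of $\partial h(0)$, which is precisely where the Hahn--Banach separation theorem enters through the nonempty interior of $\epi h$.
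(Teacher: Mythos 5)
The paper itself offers no proof of this statement: it is quoted from Zalinescu's book (Theorem 2.7.1) and used as a black box in the proof of the set-valued Theorem \ref{thm:FundDualThm}, so there is no internal argument to compare yours against. Your proof is correct and is the standard argument for this classical result: the identification $\Phi^*(0,y^*)=h^*(y^*)$, weak duality via Fenchel--Young, continuity of $h$ at $0$ obtained from local boundedness above, and attainment of the maximum from $\partial h(0)\neq\emptyset$ via separation of $\epi h$ from the boundary point $(0,h(0))$. One small point you should make explicit: in the case $h(0)\in\R$, before invoking the continuity theorem and the subdifferentiability of $h$ at $0$ (both of which presuppose that $h$ is proper), note that convexity, $h(0)\in\R$, and boundedness above on a neighborhood of $0$ together force $h>-\infty$ everywhere; this is a one-line convexity argument (if $h(y_1)=-\infty$, evaluate $h$ at $0$ written as a convex combination of $y_1$ and a nearby point where $h\le M$), but without it the nonemptiness of $\partial h(0)$ would not follow.
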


This theorem is a rather general result from which most of the known duality results (e.g. the Fenchel-Rockafellar duality theorem) can be derived by choosing a suitable function $\Phi$.

Next we consider the set-valued case.
For a function $f:X\times Y \to \F(Z,C)$ the marginal function $f_X:Y\to \F(Z,C)$, which replaces the function $h$ of the preceding theorem, is defined by 
\[
f_X(y):=\cl\bigcup_{x\in X}f(x,y).
\]

For the function $f_X$, the following properties can easily be shown.
\begin{lemma}\label{LemInf}
(i) If $f:X\times Y \to \F(Z,C)$ is convex then $f_X$ is convex as well. In particular, $f_X$ has convex values. 

(ii) For all $z^*\in C^-\setminus\cb{0}$ and all $y\in Y$,
\[
\varphi_{\of{f_X,z^*}}(y) =\inf_{x\in X}\varphi_{\of{f,z^*}}(y)
\]
holds true.
\end{lemma}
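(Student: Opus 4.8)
The plan is to treat the two parts separately, relying throughout on the description of the lattice operations and of the order on $\F(Z,C)$ given in Section \ref{sec:BCN}.

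\textbf{Part (i).} First I would observe that $f_X(y)=\cl\bigcup_{x\in X}f(x,y)$ is precisely the infimum of the family $\cb{f(x,y)\st x\in X}$ in the complete lattice $(\F(Z,C),\supseteq)$, so that $f_X(y)\in\F(Z,C)$ is well defined for every $y$. To prove convexity I would use that for $\F(Z,C)$-valued functions the relation $A\preccurlyeq_C B$ is equivalent to $B\subseteq A$, so $f_X$ is convex exactly when
\[
f_X\of{ty_1+(1-t)y_2}\supseteq tf_X(y_1)+(1-t)f_X(y_2)
\]
holds for all $y_1,y_2\in Y$ and $t\in\of{0,1}$. The core step is to establish this inclusion first at the level of the non-closed unions. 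Writing $A:=\bigcup_{x}f(x,y_1)$ and $B:=\bigcup_{x}f(x,y_2)$, I would take $a\in A$ and $b\in B$, say $a\in f(x_1,y_1)$ and $b\in f(x_2,y_2)$, and apply the $C$-convexity of $f$ on $X\times Y$ to the points $(x_1,y_1)$ and $(x_2,y_2)$; this gives $ta+(1-t)b\in f\of{tx_1+(1-t)x_2,\,ty_1+(1-t)y_2}$, hence $tA+(1-t)B\subseteq\bigcup_x f\of{x,ty_1+(1-t)y_2}$.

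Passing to closures then relies on the two standard facts in a topological vector space that scaling by a nonzero scalar is a homeomorphism and that $\cl M+\cl N\subseteq\cl(M+N)$, which together yield $t\cl A+(1-t)\cl B\subseteq\cl\of{tA+(1-t)B}\subseteq f_X\of{ty_1+(1-t)y_2}$. I expect this closure bookkeeping to be the only place requiring care; everything else is formal. Once $f_X$ is known to be convex, it is convex-valued by the general fact recorded in Section \ref{sec:BCN} that every $C$-convex $\F(Z,C)$-valued function has convex values, which settles the second assertion of (i).

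\textbf{Part (ii).} Here I would unfold the definitions. Since $\varphi_{\of{f_X,z^*}}(y)=\inf_{z\in f_X(y)}\of{-z^*(z)}$, and since the infimum over a union equals the double infimum, one has
\[
\inf_{x\in X}\varphi_{\of{f,z^*}}(x,y)=\inf_{x\in X}\inf_{z\in f(x,y)}\of{-z^*(z)}=\inf_{z\in\bigcup_x f(x,y)}\of{-z^*(z)}.
\]
It then remains to check that replacing $S:=\bigcup_x f(x,y)$ by its closure $f_X(y)$ does not change this infimum. Since $z^*$ is continuous, every point of $\cl S$ is the limit of a net in $S$ along which $-z^*$ converges, so $\inf_{\cl S}\of{-z^*}\ge\inf_{S}\of{-z^*}$; the reverse inequality is immediate from $S\subseteq\cl S$. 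This yields $\varphi_{\of{f_X,z^*}}(y)=\inf_{x\in X}\varphi_{\of{f,z^*}}(x,y)$, and the degenerate case in which $f(x,y)=\emptyset$ for all $x$ is consistent, both sides then equalling $+\infty$.
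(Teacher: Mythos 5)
Your proof is correct. Note that the paper itself offers no proof of Lemma \ref{LemInf} (it is introduced with ``the following properties can easily be shown''), and your argument is precisely the standard one the authors presumably had in mind: part (i) from $C$-convexity of $f$ together with the closure bookkeeping $t\cl A+(1-t)\cl B\subseteq\cl\of{tA+(1-t)B}$, and part (ii) from the double-infimum identity plus the fact that passing to the closure does not change the infimum of the continuous functional $-z^*$, with the empty-set convention handled consistently.
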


As a first step towards the set-valued version of the fundamental duality theorem, we will prove weak duality.

\begin{lemma}\label{lem:1}
  Let $f:X\times Y\to\F(Z,C)$. Then 
  \[
  f_X(0)\subseteq(-f^*)((0,y^*),z^*)
  \]
is satisfied for all $(y^*,z^*)\in Y^*\times C^-\setminus\cb{0}$.
\end{lemma}
\begin{proof}
By definition,
\begin{align*}
  (-f^*)((0,y^*),z^*)&= \cl\bigcup\limits_{(x,y)\in X\times Y} \of{f(x,y)+S_{((0,y^*),z^*)}(-x,-y)}\\
  &= \cl\bigcup\limits_{y\in Y}\of{\bigcup\limits_{x\in X} f(x,y) + S_{(y^*,z^*)}(-y) }\\
                &\supseteq \cl\of{\bigcup\limits_{x\in X} f(x,0) + S_{(y^*,z^*)}(0) }\\
        	  & \supseteq  f_X(0)
\end{align*}
holds for all $(y^*,z^*)\in Y^*\times C^-\setminus\cb{0}$.
\end{proof}

Note that the results of Lemma \ref{LemInf} and Lemma \ref{lem:1} are obviously also true under the weaker assumttion that $X$, $Y$ and $Z$ are merely topological vector spaces.

Next, we state and prove a set-valued version of the fundamental duality theorem.
As regularity condition we use upper semicontinuity (being equivalent to continuity) of all scalarizations that turned out to be the weakest of all considered upper semicontinuity properties in the general case.

\begin{theorem}\label{thm:FundDualThm}
  Let $f:X\times Y\to\F(Z,C)$ a convex function with $(x_0,0)\in\dom f$ for some $x_0\in X$. If $\varphi_{(f(x_0,\cdot),z^*)}:Y\to\OLR$ is u.s.c. in $0\in Y$ for all $z^*\in C^-\setminus\cb{0}$ then
  \begin{align*}
    f_X(0) & =\bigcap\limits_{(y^*,z^*)\in Y^*\times C^-\setminus\cb{0}} (-f^*)((0,y^*),z^*)
  \end{align*}
  and there exists a family $\cb{y^*_{z^*}\st z^*\in C^-\setminus\cb{0},\; \varphi_{(f,z^*)} \text{ is proper}}\subseteq Y^*$ such that
  \begin{align*}
  	f_X(0) & =\bigcap\limits_{\substack{z^*\in C^-\setminus\cb{0}\\ \varphi_{(f,z^*)} \text{ is proper}} }(-f^*)((0,y^*_{z^*}),z^*).
  \end{align*}
\end{theorem}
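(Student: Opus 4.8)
The plan is to reduce everything to the scalar fundamental duality theorem (Theorem \ref{ThmScalarFD}) applied to the scalarizations $\varphi_{(f,z^*)}$, and then to reassemble the set-valued identities by means of the representation of convex-valued functions through their scalarizations recalled in Section \ref{sec:BCN}. First I would fix $z^*\in C^-\setminus\cb{0}$ and abbreviate $\Phi:=\varphi_{(f,z^*)}:X\times Y\to\OLR$. Since $f$ is convex, $\Phi$ is convex, and by Lemma \ref{LemInf}(ii) its marginal coincides with the scalarization of the marginal, $\inf_{x\in X}\Phi(x,y)=\varphi_{(f_X,z^*)}(y)$. Because $(x_0,0)\in\dom f$ we have $\Phi(x_0,0)=\varphi_{(f(x_0,\cdot),z^*)}(0)<+\infty$, so $\Phi$ is not identically $+\infty$. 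Throughout I would use the dictionary $(-f^*)((0,y^*),z^*)=\cb{z\in Z\st -\Phi^*(0,y^*)\le -z^*(z)}$ from Section \ref{sec:BCN}, where $\Phi^*$ is the conjugate over $X\times Y$.

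The nondegenerate directions come first. If $\Phi$ is proper, then $\Phi(x_0,0)\in\R$, and the slice $\Phi(x_0,\cdot)$ is convex (a restriction of the convex $\Phi$), finite at $0$ and, by hypothesis, u.s.c. at $0$; since for convex functions upper semicontinuity at a point of finiteness is equivalent to continuity there, $\Phi(x_0,\cdot)$ is continuous at $0$. Thus Theorem \ref{ThmScalarFD} applies and gives
\[
\varphi_{(f_X,z^*)}(0)=\inf_{x\in X}\Phi(x,0)=\max_{y^*\in Y^*}-\Phi^*(0,y^*),
\]
the maximum being attained at some $y^*_{z^*}\in Y^*$. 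Intersecting the weak-duality sets over $y^*$ and substituting the conjugate dictionary yields
\[
\bigcap_{y^*\in Y^*}(-f^*)((0,y^*),z^*)=\cb{z\st \varphi_{(f_X,z^*)}(0)\le -z^*(z)}=(-f^*)((0,y^*_{z^*}),z^*).
\]

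The hard part will be the degenerate directions. If $\Phi$ is improper it attains $-\infty$ at some $(x_1,y_1)$, whence $\Phi^*\equiv+\infty$ and $(-f^*)((0,y^*),z^*)=Z$ for every $y^*$; I must then show the left-hand representation also contributes nothing, i.e. $\varphi_{(f_X,z^*)}(0)=-\infty$. My argument is a reflection in $Y$: continuity of $\Phi(x_0,\cdot)$ at $0$ provides a balanced neighborhood $N$ of $0$ on which $\Phi(x_0,\cdot)<+\infty$, hence $\varphi_{(f_X,z^*)}=\inf_x\Phi(x,\cdot)<+\infty$ on $N$; convexity of the marginal together with $\varphi_{(f_X,z^*)}(y_1)\le\Phi(x_1,y_1)=-\infty$ forces $\varphi_{(f_X,z^*)}=-\infty$ on the half-open segment $[y_1,0)$, so some $y_2\in N\setminus\cb{0}$ has $\varphi_{(f_X,z^*)}(y_2)=-\infty$; evaluating convexity at $0=\tfrac12 y_2+\tfrac12(-y_2)$ with $-y_2\in N$ then gives $\varphi_{(f_X,z^*)}(0)=-\infty$. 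Hence in both cases $\bigcap_{y^*}(-f^*)((0,y^*),z^*)=\cb{z\st\varphi_{(f_X,z^*)}(0)\le -z^*(z)}$, the improper directions producing $Z$ on each side.

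Finally I would assemble the two formulas. By Lemma \ref{LemInf}(i) the marginal $f_X$ is convex-valued, so the separation representation from Section \ref{sec:BCN} gives $f_X(0)=\bigcap_{z^*\in C^-\setminus\cb{0}}\cb{z\st\varphi_{(f_X,z^*)}(0)\le -z^*(z)}$. Intersecting the per-direction identity above over all $z^*$ delivers the first claimed formula, while intersecting only the representatives $(-f^*)((0,y^*_{z^*}),z^*)$ over the proper directions (the improper ones contributing $Z$ on both sides) delivers the second; weak duality (Lemma \ref{lem:1}) is precisely the easy inclusion $f_X(0)\subseteq\bigcap(-f^*)(\dots)$ and serves as a cross-check. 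The whole difficulty is concentrated in the improper case and in the upgrade from upper semicontinuity to the continuity required by Theorem \ref{ThmScalarFD}; the remainder is bookkeeping with the scalarization dictionary.
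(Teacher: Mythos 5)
Your proposal follows essentially the same route as the paper's proof: scalarize, apply Theorem \ref{ThmScalarFD} direction by direction, show that improper directions contribute $Z$ on both sides because $\varphi_{(f_X,z^*)}(0)=-\infty$ there, and reassemble via the scalarization representation of $f_X(0)$ together with the conjugate dictionary and Lemma \ref{LemInf}. The differences are purely presentational: you spell out the segment/reflection argument that the paper compresses into the inclusion $0\in\Int\of{\dom \varphi_{(f,z^*)}(x_0,\cdot)}\subseteq\Int\of{\dom \varphi_{(f_X,z^*)}}$, you make the u.s.c.-to-continuity upgrade explicit (the paper handles it by remark), and your per-direction identity renders Lemma \ref{lem:1} a cross-check rather than the closing step of the paper's sandwich argument.
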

\begin{proof}
For all $z^*\in C^-\setminus\cb{0}$, $\varphi_{(f,z^*)}$ is convex since $f$ is convex. 

If $\varphi_{(f,z^*)}$ is proper then we can apply Theorem \ref{ThmScalarFD} and obtain
\[
\exists y^*_{z^*}\in Y^*:\quad  \inf\limits_{x\in X} \varphi_{(f,z^*)}(x,0) =-(\varphi_{(f,z^*)})^*(0,y^*_{z^*}).
\]
By Lemma \ref{LemInf} (ii) we get $\varphi_{(f_X,z^*)}(0)=-(\varphi_{(f,z^*)})^*(0,y^*_{z^*})$.

If $\varphi_{(f,z^*)}$ is not proper then there are $x\in X$, $y\in Y$ such that 
$\varphi_{(f,z^*)}(x,y)=-\infty$ (since $(x_0,0)\in\dom\varphi_{(f,z^*)}$). Hence $\varphi_{(f_X,z^*)}(y)=-\infty$ and $\varphi_{(f_X,z^*)}(0)=-\infty$, too, since 
$0\in\Int\of{\dom \varphi_{(f,z^*)}(x_0,\cdot)}\subseteq\Int\of{\dom \varphi_{(f_X,z^*)}}$ due to the upper semicontinuity assumption. Consequently, we have 
\[
\cb{z\in Z\st \varphi_{(f_X,z^*)}(0)\leq -z^*(z)}=Z
\]
in this case.

Since $f_X(0)$ is convex we have
\begin{align*}
f_X(0)&=\bigcap_{z^*\in C^-\setminus\cb{0}}\cb{z\in Z\st \varphi_{(f_X,z^*)}(0)\leq -z^*(z)}\\
&=\bigcap_{\substack{z^*\in C^-\setminus\cb{0}\\ \varphi_{(f,z^*)} \text{ is proper}} }\cb{z\in Z\st \varphi_{(f_X,z^*)}(0)\leq -z^*(z)}\\
&= \bigcap\limits_{\substack{z^*\in C^-\setminus\cb{0}\\ \varphi_{(f,z^*)} \text{ is proper}} } \cb{z\in Z\st -(\varphi_{(f,z^*)})^*(0,y^*_{z^*})\leq -z^*(z) }\\
&= \bigcap\limits_{\substack{z^*\in C^-\setminus\cb{0}\\ \varphi_{(f,z^*)} \text{ is proper}} } (-f^*)((0,y^*_{z^*}),z^*)\\
&\supseteq \bigcap_{\substack{z^*\in C^-\setminus\cb{0} \\ y^*\in Y^*}} (-f^*)((0,y^*),z^*)\\
&\supseteq f_X(0),
\end{align*}
where the last inclusion follows from Lemma \ref{lem:1}.
\end{proof}

\begin{remark}
A similar result was already proven in \cite[Theorem 5.6 (b)]{Schrage11}. There the positive conjugate  was used instead of the negative conjugate, but the expression $H(z^*){-^{\negmedspace\centerdot\,}}f^*(0,y^*,z^*)$ in \cite{Schrage11} coincides with $(-f^*)((0,y^*),z^*)$ above.

The main difference between the two results is that we could weaken the regularity condition due to our considerations in Section \ref{sec:CCC} and we do not need the assumption that $f_X$ is $z^*$-proper for all $z^*\in C^-\setminus\cb{0}$.
\end{remark}

\begin{remark}
The existence of the family $\cb{y^*_{z^*} \st z^*\in C^-\setminus\cb{0},\; \varphi_{(f,z^*)} \text{ is proper}}$ replaces the attainment of the supremum in formula \eqref{eq:max}. In the set-valued case the infimum is no longer attained in a single point but in a set of points. In fact, the set 
\[
\cb{(y^*_{z^*},z^*) \st z^*\in C^-\setminus\cb{0},\; \varphi_{(f,z^*)} \text{ is proper}}
\]
is a solution of the optimization problem
\[
\text{maximize } (-f^*)((0,\cdot),\cdot) : Y^*\times Z^* \to \F(Z,C) \text{ w.r.t. } \supseteq \text{ over }  Y^*\times C^-\setminus\cb{0}
\]
in the sense of \cite{HL2011}.
\end{remark}

\section*{Acknowledgement}

We would like to thank Andreas Hamel and an anonymous referee for some helpful suggestions.

\bibliographystyle{abbrv}
\bibliography{CSVM}
\end{document}